\newcommand{\ZZ}{\mathbbm{Z}}
\newcommand{\PP}{\mathbbm{P}}
\newtheorem{theorem}{Theorem}
\newtheorem{lemma}[theorem]{Lemma}
\newtheorem{proposition}[theorem]{Proposition}
\begin{document}
\title{On syzygies of Segre embeddings of $\PP^1\times\PP^1$}
\author{Alexander Lemmens}
\date{\today}
\maketitle
\footnotetext[1]{The author is supported by the Flemish Research Council (FWO).}
\begin{abstract}
We construct a nonminimal graded free resolution of Segre embeddings of $\PP^1\times\PP^1$, although we don't compute all maps. We use this to prove an explicit formula for certain nonzero entries in the graded Betti table, at the end of the first row. We work over an arbitrary field $k$.
\end{abstract}
\tableofcontents
\section{Introduction}
We work over an arbitrary field $k$. Consider $\PP^1\times\PP^1$ as a variety embedded in $\PP^{(a+1)(b+1)-1}$ via the line bundle $O(a)\otimes O(b)$. This article is about the syzygies of this variety.\\

For any projective variety $X\subset\PP^{N-1}$ the homogeneous coordinate ring $R$ can be viewed as a module over the polynomial ring $S:=k[x_1,\ldots,x_N]$ and one can construct a minimal free graded resolution
$$0\longrightarrow F_d\longrightarrow\ldots\longrightarrow F_1\longrightarrow F_0\longrightarrow R,$$
at least in theory. In practise this can be very hard, and constructing a nonminimal resolution can be easier. But the minimal resolution is unique up to isomorphism.
So each $F_p$ is a graded free $S$-module and is a direct sum
$$F_p=\bigoplus_j S(-p-i_j),$$
and for each $p$ the number of $j$ such that $i_j=q$ is denoted $\kappa_{p,q}$.
By $S(-p-i_j)$ we mean $S$ considered as a graded module over itself where any generator has degree $p+i_j$. The numbers $\kappa_{p,q}$ are gathered in the graded Betti table:
\begin{center}
$$
\begin{array}{r|cccccc}
  & 0 & 1 & 2 & 3 & 4 & \ldots \\
\hline
0 & 1 & 0 & 0 & 0 & 0 & \dots  \\
1 & 0 & \kappa_{1,1} & \kappa_{2,1} & \kappa_{3,1} & \kappa_{4,1} & \dots  \\
2 & 0 & \kappa_{1,2} & \kappa_{2,2} & \kappa_{3,2} & \kappa_{4,2} & \dots  \\
3 & 0 & \kappa_{1,3} & \kappa_{2,3} & \kappa_{3,3} & \kappa_{4,3} & \dots  \\
\vdots & \vdots & \vdots & \vdots & \vdots & \vdots &  \\
\end{array}.$$
\end{center}
There are a few classes of algebraic varieties where the syzygies are completely known, for instance rational normal scrolls and varieties defined by certain monomial ideals \cite[p.\ 23]{eisenbud}. In most cases however this seems out of reach.
Elena Rubei studied property $N_p$ for Segre embeddings $\PP^{n_1}\times\ldots\times\PP^{n_d}$ in \cite{rubei,rubei2}. Thanks to the paper \cite{oeding} we know for any projective embedding of $\PP^1\times\ldots\times\PP^1$ exactly which entries of the Betti table are zero. We shall be concerned with $\PP^1\times\PP^1$.
Suppose
\begin{align}
\PP^1\times\PP^1&\longrightarrow\PP^{N-1}:\nonumber \\
((x:y),(z:w))&\longmapsto(x^az^b:x^{a-1}yz^b:\ldots:y^az^b:\ldots:x^aw^b:\ldots:y^aw^b)\nonumber
\end{align}
with $1\leq a\leq b$ and $N=(a+1)(b+1)$. Then the Betti table has the following shape:
\begin{center}
$$
\begin{array}{r|cccccc}
  & 0 & 1 & 2 & \ldots & N-3 & 0 \\
\hline
0 & 1 & 0 & 0 & \ldots & 0 & 0 \\
1 & 0 & \kappa_{1,1} & \kappa_{2,1} & \ldots & \kappa_{N-3,1} & 0 \\
2 & 0 & \kappa_{1,2} & \kappa_{2,2} & \ldots & \kappa_{N-3,2} & 0 \\
3 & 0 & 0 & 0 & 0 & 0 & 0
\end{array}$$
\end{center}
The entries $\kappa_{ab+b,1},\ldots,\kappa_{N-3,1}$ are all zero and so are the entries $\kappa_{1,2},\ldots,$ $\kappa_{2a+2b-3,2}$. All the remaining entries are nonzero.\\ 

We will construct a free graded resolution of this embedding when $3\leq a\leq b$. We won't explicitly construct all maps of this resolution but we will use it to obtain an explicit formula for $\kappa_{p,1}$ in the graded Betti table of this variety for $N-b-2\leq p$.
The strategy for constructing the free graded resolution is to embed our projective surface into a rational normal scroll of dimension $a+1$, which is scrolled out by $a+1$ rational normal curves of degree $b$. We then construct a resolution of our surface over this scroll and then take a free resolution of each module in the resolution to get a double complex. We then perform an iterated mapping cone construction to obtain a free resolution of our surface. This was inspired by what Schreyer did in \cite{schreyer} to obtain resolutions of projective curves.
\begin{theorem}\label{firstrow}
Let $3\leq a\leq b$ then we have the following
\begin{itemize}
\item[$i$] For all $ab+a\leq p$ we have $\kappa_{p,1}=p\binom{(a+1)b}{p+1}$.
\item[$ii$] For $p=ab+a-1$ we have $\kappa_{p,1}=p\binom{(a+1)b}{p+1}+p$.
\end{itemize}
\end{theorem}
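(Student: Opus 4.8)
The plan is to extract the desired first-row entries from the nonminimal resolution built in the earlier sections. Recall that construction: $\mathcal O_X$ admits a relative Eagon--Northcott resolution $\mathcal F^{a-1}\to\cdots\to\mathcal F^1\to\mathcal F^0=\mathcal O_Y\to\mathcal O_X$ over the scroll $Y$, where for $1\le q\le a-1$ the sheaf $\mathcal F^q$ is a sum of $q\binom{a}{q+1}$ copies of one scroll line bundle $\mathcal L_q$; resolving the associated graded $R_Y$-modules over $S$ produces a double complex whose $0$-th row is the Eagon--Northcott complex $P^{(0)}$ of $R_Y$ (which is $2$-linear, with $\beta_{m,m+1}(R_Y)=m\binom{(a+1)b}{m+1}$) and whose $q$-th row ($q\ge 1$) is the $S$-resolution $P^{(q)}$ of $\bigoplus_mH^0(Y,\mathcal F^q(m))$, a sum of $q\binom{a}{q+1}$ copies of the scroll module $M_q:=\bigoplus_mH^0(Y,\mathcal L_q(m))$; and the iterated mapping cone $\mathbb G$, with $\mathbb G_m=\bigoplus_{q\ge 0}P^{(q)}_{m-q}$, is a (nonminimal) free $S$-resolution of $R_X$. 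The first move is to pass to the cleaner bookkeeping of $0\to I_{X/Y}\to R_Y\to R_X\to 0$: because $I_{X/Y}$ is generated in degree $2$ and $R_Y$ is $2$-linear, the long exact $\operatorname{Tor}$-sequence degenerates, for $p\ge 2$, to a short exact sequence
$$0\longrightarrow\operatorname{Tor}^S_p(R_Y,k)_{p+1}\longrightarrow\operatorname{Tor}^S_p(R_X,k)_{p+1}\longrightarrow\operatorname{Tor}^S_{p-1}(I_{X/Y},k)_{p+1}\longrightarrow 0,$$
so $\kappa_{p,1}=p\binom{(a+1)b}{p+1}+\lambda_{p-1}$, where $\lambda_n:=\dim\operatorname{Tor}^S_n(I_{X/Y},k)_{n+2}$ is the $n$-th term of the linear strand of $I_{X/Y}$. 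It therefore suffices to prove $\lambda_n=0$ for $n\ge ab+a-1$ and $\lambda_{ab+a-2}=ab+a-1$.

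I would next read off the shape of each $P^{(q)}$. The line bundle $\mathcal L_q$ has vanishing intermediate cohomology on $Y$, so $M_q$ is a maximal Cohen--Macaulay $R_Y$-module and $\operatorname{pd}_SM_q=(a+1)b-1$; its only nonvanishing higher cohomology on $Y$ (in top degree, $H^{a+1}$) forces $\operatorname{reg}M_q=q+2$, so the minimal $S$-resolution of $M_q$ --- generated in degree $q+1$ --- has precisely a linear strand, in homological degrees $0\le j\le\ell_q$, and a second strand beginning in degree $\ell_q+1$. From the Hilbert series --- whose $K$-polynomial is $t^{q+1}\big[(b(q+1)+1)+(b(a-q)-1)t\big](1-t)^{(a+1)b-1}$, the two bracketed coefficients summing to $(a+1)b$ --- one extracts $\ell_q=b(q+1)$ (the naive Betti number predicted by the $K$-polynomial being positive exactly for $j\le b(q+1)$) together with the linear-strand Betti numbers of $M_q$; at the top these are $\beta_{\ell_q,\ell_q+q+1}(M_q)=\binom{(a+1)b}{b(q+1)}$. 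Feeding the $P^{(q)}$ into the mapping cone that resolves $I_{X/Y}$ (attached to the truncated complex $\mathcal F^{a-1}\to\cdots\to\mathcal F^1\to\mathcal I_{X/Y}$), each $P^{(q)}$ contributes to $\lambda_n$ only for $q-1\le n\le q-1+\ell_q$; since $q-1+\ell_q=(b+1)q+b-1$ is strictly increasing in $q$ and equals $ab+a-2$ at $q=a-1$, no $M_q$ reaches $\lambda_n$ with $n\ge ab+a-1$. Thus $\lambda_n=0$ for $n\ge ab+a-1$, and --- together with $\binom{(a+1)b}{p+1}=0$ for $p\ge ab+b$ --- this is part $i$.

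For part $ii$, $\lambda_{ab+a-2}$ receives input only from the top of the linear strand of $M_{a-1}$, i.e.\ from $P^{(a-1)}_{ab}$, whose linear strand has $(a-1)\binom{(a+1)b}{ab}$ generators; but here the iterated mapping cone is genuinely nonminimal. The connecting chain map $P^{(a-1)}\to P^{(a-2)}$ lifting the last differential of the relative Eagon--Northcott complex carries the linear-strand generators of $P^{(a-1)}_{ab}$ into the second-strand generators of $P^{(a-2)}_{ab}$, which lie in the same internal degree, and after cancelling these the surviving rank is $ab+a-1$. The crux is to show that this single connecting map has exactly the claimed rank: all the modules and maps in play arise functorially from the two standard $\operatorname{GL}_2$-representations (one on $\PP^1$, the other acting through $\operatorname{Sym}^a$ on the $\PP^a$ of the scroll), so it is enough to decompose the relevant $\operatorname{Tor}$-groups as $\operatorname{GL}_2\times\operatorname{GL}_2$-modules and verify that the component maps do not vanish; as the pieces are Weyl (Schur) modules this is valid over an arbitrary $k$. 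Granting this, $\lambda_{ab+a-2}=ab+a-1$, so $\kappa_{ab+a-1,1}=(ab+a-1)\binom{(a+1)b}{ab+a}+(ab+a-1)$, which is part $ii$.

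The step I expect to be the real obstacle is precisely this last one --- pinning down, exactly, the rank of the single surviving connecting map $P^{(a-1)}\to P^{(a-2)}$ --- together with the careful determination of the linear-strand Betti numbers of the $M_q$ near the ends of their strands, which requires excluding any accidental consecutive cancellation inside the individual complexes $P^{(q)}$ that would shorten a linear strand and disturb the bookkeeping above. The rest is arithmetic with Eagon--Northcott Betti numbers and combinatorics of the iterated cone.
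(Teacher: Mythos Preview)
Your approach to part $(i)$ is essentially the paper's own: the short exact sequence $0\to I_{X/Y}\to R_Y\to R_X\to 0$ is just a repackaging of the observation that, in the iterated mapping cone resolving $R_X$, the piece $Q_{0,p}\otimes k$ contributes $p\binom{(a+1)b}{p+1}$ and nothing else lands in degree $p+1$ once $p>ab+a-1$. The arithmetic with the linear-strand lengths $\ell_q=(q+1)b$ is exactly the inequality manipulation the paper carries out.

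For part $(ii)$ there is a genuine gap. Computing the kernel of the \emph{single} connecting map $P^{(a-1)}_{ab}\to P^{(a-2)}_{ab}$ (whether by your representation-theoretic method or by the paper's explicit combinatorics) gives only an \emph{upper} bound $\lambda_{ab+a-2}\le ab+a-1$. In the iterated mapping cone the differential out of $P^{(a-1)}_{ab}$ has further components landing in the second strands of $P^{(q)}$ for every $1\le q\le a-3$ (for instance $Q_{a-1,ab}\to Q_{a-3,ab+1}$), and these sit in the \emph{same} internal degree $p+1$; there is no a priori reason they vanish after tensoring with $k$, so the true kernel could be strictly smaller. Your sentence ``Granting this, $\lambda_{ab+a-2}=ab+a-1$'' is therefore not justified --- you have established only $\kappa_{p,1}\le p\binom{(a+1)b}{p+1}+p$.

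The paper closes this gap by an entirely separate argument: it embeds $\PP^1\times\PP^1$ into the \emph{second} scroll $\PP^1\times\PP^b$ (not just the first one $\PP^a\times\PP^1$ used to build the resolution), writes down explicit Koszul cocycles in $\bigwedge^{p-1}V\otimes S^2V$ coming from both scrolls, and checks by a bidegree/combinatorial argument that the two families together are linearly independent. This produces exactly $p\binom{(a+1)b}{p+1}+p$ independent classes in $K_{p,1}$, matching the upper bound. Your proposal contains no analogue of this lower-bound step; a purely representation-theoretic substitute would have to control all the higher connecting maps simultaneously, which is considerably more than ``verify that the component maps do not vanish.''
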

The source of the syzygies in the above theorem is the embeddings
$$\PP^1\times\PP^1\longrightarrow\PP^1\times\PP^b\longrightarrow\PP^{N-1},$$
$$\PP^1\times\PP^1\longrightarrow\PP^a\times\PP^1\longrightarrow\PP^{N-1},$$
where $N=(a+1)(b+1))$. Note that $\PP^a\times\PP^1$ and $\PP^1\times\PP^b$ are rational normal scrolls in $\PP^{N-1}$ and their Betti tables are known \cite[pp.\ 110-112]{schreyer}. For every $p\geq 0$ this induces inequalities
$$p\binom{(a+1)b}{p+1}=\kappa_{p,1}(\PP^a\times\PP^1)\leq\kappa_{p,1}(\PP^1\times\PP^1),$$
$$p\binom{(b+1)a}{p+1}=\kappa_{p,1}(\PP^1\times\PP^b)\leq\kappa_{p,1}(\PP^1\times\PP^1),$$
and this is exactly where these syzygies are coming from.\\

One can view $\PP^1\times\PP^1$ as a toric variety and take the bidegree decomposition of the syzygy spaces with respect to the torus action (see \cite[p.\ 8]{computingbetti}).
In the case $a=b\geq 3$ we find that $\kappa_{a^2+a-1,1}=2(a^2+a-1)$ and the bidegree table of this entry takes the following form:
\begin{center}
\begin{tabular}{ccccccccccc}
0&0&0&0&0&1&0&0&0&0&0\\
0&0&0&0&0&1&0&0&0&0&0\\
0&0&0&0&0&1&0&0&0&0&0\\
0&0&0&0&0&1&0&0&0&0&0\\
0&0&0&0&0&1&0&0&0&0&0\\
1&1&1&1&1&2&1&1&1&1&1\\
0&0&0&0&0&1&0&0&0&0&0\\
0&0&0&0&0&1&0&0&0&0&0\\
0&0&0&0&0&1&0&0&0&0&0\\
0&0&0&0&0&1&0&0&0&0&0\\
0&0&0&0&0&1&0&0&0&0&0\\
\end{tabular}
\end{center}
This is the case $a=b=3$, but the cross pattern appears for all $a=b\geq 3$ (see proposition \ref{cross}).\newline
For $a=b=2$ one has $\kappa_{a^2+a-1,1}=20$ and bidegree table is:
\begin{center}
\begin{tabular}{ccccc}
0&0&1&0&0\\
0&1&2&1&0\\
1&2&4&2&1\\
0&1&2&1&0\\
0&0&1&0&0\\
\end{tabular}
\end{center}
In the case $2=a\leq b$ the entire Betti-table is known. We have
$$\kappa_{p,2}=\max(p-2b-1,0)\binom{3b}{p},\hspace{5 pt}\kappa_{p,1}=\kappa_{p-1,2}+p\binom{3b+2}{p+1}-4b\binom{3b}{p-1}.$$
This follows from \cite[Corollary 4, p.\ 3]{lemmens}.\\

In this article all modules will be graded and all morphisms degree-preserving.
\subsection*{Acknowledgements}
This research is part of my PhD which is funded by the Flemish Research Council (FWO).
\section{The Eagon-Northcott complex}
For a rational normal scroll $X$ with invariants $f=e_1+\ldots+e_\ell$, the homogeneous coordinate ring is
$$R_{e_1,\ldots,e_\ell}:=\bigoplus_{n\geq 0}\Big(\bigoplus_{x_{i_1}\ldots x_{i_n}\in k[x_1,\ldots,x_\ell]}V_{i_1,\ldots,i_n}\Big)$$
where $V_{i_1,\ldots,i_n}$ is a vector space of dimension $e_{i_1}+\ldots+e_{i_n}+1$ whose basis elements we denote by $b_{i_1,\ldots,i_n,0},\ldots,b_{i_1,\ldots,i_n,e_{i_1}+\ldots+e_{i_n}}$. Note that $b_{i_1,\ldots,i_n,j}$ does not depend on the order of $i_1,\ldots,i_n$. Multiplication in this ring is defined by the rule
$$b_{i_1,\ldots,i_n,j}\cdot b_{i'_1,\ldots,i'_m,j'}=b_{i_1,\ldots,i_n,i'_1,\ldots,i'_m,j+j'}.$$
The second direct sum is over the monomials of degree $n$ in the variables $x_1,\ldots,x_\ell$\\

\noindent\textbf{Example}\vspace{3 pt}\newline
Suppose $\ell=3$ $e_1=2$, $e_2=3$, $e_3=3$ and $f=8$ then we have the variables $x_1$, $x_2$ and $x_3$. The following picture represents the homogeneous coordinate ring up to the degree two part, where each dot corresponds to a basis element.
\begin{center}
\begin{tikzpicture}
\draw[thick] (4,2.5) -- (5,2.5);
\draw[thick] (4,2) -- (5.5,2);
\draw[thick] (4,1.5) -- (5.5,1.5);
\draw[thick] (8,2.5) -- (10,2.5);
\draw[thick] (8,2) -- (10.5,2);
\draw[thick] (8,1.5) -- (10.5,1.5);
\draw[thick] (8,1) -- (11,1);
\draw[thick] (8,.5) -- (11,.5);
\draw[thick] (8,0) -- (11,0);
\fill[black] (0.5,2.5) circle (2 pt);
\fill[black] (4,2.5) circle (2 pt);
\fill[black] (4.5,2.5) circle (2 pt);
\fill[black] (5,2.5) circle (2 pt);
\fill[black] (4,2) circle (2 pt);
\fill[black] (4.5,2) circle (2 pt);
\fill[black] (5,2) circle (2 pt);
\fill[black] (5.5,2) circle (2 pt);
\fill[black] (4,1.5) circle (2 pt);
\fill[black] (4.5,1.5) circle (2 pt);
\fill[black] (5,1.5) circle (2 pt);
\fill[black] (5.5,1.5) circle (2 pt);
\fill[black] (8,2.5) circle (2 pt);
\fill[black] (8.5,2.5) circle (2 pt);
\fill[black] (9,2.5) circle (2 pt);
\fill[black] (9.5,2.5) circle (2 pt);
\fill[black] (10,2.5) circle (2 pt);
\fill[black] (8,2) circle (2 pt);
\fill[black] (8.5,2) circle (2 pt);
\fill[black] (9,2) circle (2 pt);
\fill[black] (9.5,2) circle (2 pt);
\fill[black] (10,2) circle (2 pt);
\fill[black] (10.5,2) circle (2 pt);
\fill[black] (8,1.5) circle (2 pt);
\fill[black] (8.5,1.5) circle (2 pt);
\fill[black] (9,1.5) circle (2 pt);
\fill[black] (9.5,1.5) circle (2 pt);
\fill[black] (10,1.5) circle (2 pt);
\fill[black] (10.5,1.5) circle (2 pt);
\fill[black] (8,1) circle (2 pt);
\fill[black] (8.5,1) circle (2 pt);
\fill[black] (9,1) circle (2 pt);
\fill[black] (9.5,1) circle (2 pt);
\fill[black] (10,1) circle (2 pt);
\fill[black] (10.5,1) circle (2 pt);
\fill[black] (11,1) circle (2 pt);
\fill[black] (8,.5) circle (2 pt);
\fill[black] (8.5,.5) circle (2 pt);
\fill[black] (9,.5) circle (2 pt);
\fill[black] (9.5,.5) circle (2 pt);
\fill[black] (10,.5) circle (2 pt);
\fill[black] (10.5,.5) circle (2 pt);
\fill[black] (11,.5) circle (2 pt);
\fill[black] (8,0) circle (2 pt);
\fill[black] (8.5,0) circle (2 pt);
\fill[black] (9,0) circle (2 pt);
\fill[black] (9.5,0) circle (2 pt);
\fill[black] (10,0) circle (2 pt);
\fill[black] (10.5,0) circle (2 pt);
\fill[black] (11,0) circle (2 pt);
\node at (0,2.5) {1};
\node at (3.5,2.5) {$x_1$};
\node at (3.5,2) {$x_2$};
\node at (3.5,1.5) {$x_3$};
\node at (7.5,2.5) {$x_1^2$};
\node at (7.5,2) {$x_1x_2$};
\node at (7.5,1.5) {$x_1x_3$};
\node at (7.5,1) {$x_2^2$};
\node at (7.5,.5) {$x_2x_3$};
\node at (7.5,0) {$x_3^2$};
\node at (.5,3) {degree zero part};
\node at (4.5,3) {degree one part};
\node at (8.5,3) {degree two part};
\end{tikzpicture}
\end{center}
In our notation the second dot in the first row of the degree one part is denoted $b_{1,1}$ and the third dot in that same row is denoted $b_{1,2}$. Multiplying them in this ring yields $b_{1,1,3}$ which is the fourth dot in the first row in the degree two part.\\

For any $c\geq 0$ one has the $R_{e_1,\ldots,e_\ell}$-module
$$M_{e_1,\ldots,e_\ell,c}:=\bigoplus_{n\geq 0}\Big(\bigoplus_{x_{i_1}\ldots x_{i_n}\in k[x_1,\ldots,x_\ell]}V_{i_1,\ldots,i_n,c}\Big)$$
where $V_{i_1,\ldots,i_n,c}$ is a vector space whose basis elements we denote by $b_{i_1,\ldots,i_n,c,j}$ with $0\leq j\leq e_{i_1}+\ldots+e_{i_n}+c$. Again it does not depend on the order of the $i_1,\ldots,i_n$. Scalar multiplication is defined via the rule
$$b_{i_1,\ldots,i_n,j}\cdot b_{i'_1,\ldots,i'_n,c,j'}=b_{i_1,\ldots,i_n,i'_1,\ldots,i'_n,c,j+j'}.$$
Obviously the case $c=0$ just gives $R_{e_1,\ldots,e_\ell}$.\newline
The Eagon-Northcott complex gives a minimal free graded resolution of these modules. Note that these free modules are over the polynomial ring in $(e_1+1)+\ldots+(e_\ell+1)$ variables, which we identify with the $b_{j,i}\in R_{e_1,\ldots,e_\ell}$ of degree one. Note that $R_{e_1,\ldots,e_\ell}$ and $M_{e_1,\ldots,e_\ell,c}$ can also be viewed as graded modules over this polynomial ring.\newline
To construct this resolution we first introduce the free module $F$ of rank $e_1+\ldots+e_\ell$ over our polynomial ring. We denote its basis elements $f_{i,j}$, where $1\leq i\leq\ell$ and $1\leq j\leq e_i$. We can now define the minimal free graded resolution of $M_c:=M_{e_1,\ldots,e_\ell,c}$:
\begin{align}
\bigwedge^{f}F\otimes V_{f-c-2}(-f)&\longrightarrow\ldots\longrightarrow\bigwedge^{c+2}F(-c-2)\longrightarrow\bigwedge^cF(-c)\nonumber \\
\longrightarrow\bigwedge^{c-1}F\otimes V_{1}(-c+1)&\longrightarrow\ldots\longrightarrow\bigwedge^1F\otimes V_{c-1}(-1)\longrightarrow V_c\longrightarrow M_c.\nonumber
\end{align}
Here $V_a$ means a rank $a+1$ free graded module whose basis elements we label $B_0,\ldots,B_a$. The paranthesized integers mean that the degrees are shifted to make the morphisms degree-preserving. The morphisms in the first row are defined by
\begin{align}
\bigwedge^mF\otimes V_n(-m)\longrightarrow&\bigwedge^{m-1}F\otimes V_{n-1}(-m+1):\hspace{10 pt}f_{i_1,j_1}\wedge\ldots\wedge f_{i_m,j_m}\otimes B_p\longmapsto\nonumber \\
&\sum_{a=1}^m(-1)^ab_{i_a,j_a-1}\cdot f_{i_1,j_1}\wedge\ldots\wedge\widehat{f_{i_a,j_a}}\wedge\ldots\wedge f_{i_m,j_m}\otimes B_{p-1}\nonumber \\
-&\sum_{a=1}^m(-1)^ab_{i_a,j_a}\cdot f_{i_1,j_1}\wedge\ldots\wedge\widehat{f_{i_a,j_a}}\wedge\ldots\wedge f_{i_m,j_m}\otimes B_p\nonumber
\end{align}
where the hat means that term is deleted. The dot means scalar multiplication of the module and $b_{i_a,j_a}$ means the variable in the polynomial ring over which we work corresponding to the degree one element $b_{i_a,j_a}$ of $R_{e_1,\ldots,e_\ell}$. If the index of $B_p$ or $B_{p-1}$ is out of range then that term is deleted. This happens for $B_n$ and $B_{-1}$.\newline
The last map on the first row is given by
\begin{align}
\bigwedge^{c+2}F(-c-2)&\longrightarrow\bigwedge^cF(-c):\hspace{10 pt}f_{i_1,j_1}\wedge\ldots\wedge f_{i_{c+2},j_{c+2}}\longmapsto\nonumber \\
&\sum_{1\leq a_1<a_2\leq c+2}(-1)^{a_1+a_2}(b_{i_{a_1},j_{a_1}}b_{i_{a_2},j_{a_2}-1}-b_{i_{a_1},j_{a_1}-1}b_{i_{a_2},j_{a_2}})
\nonumber \\
\cdot &f_{i_1,j_1}\wedge\ldots\wedge\widehat{f_{i_{a_1},j_{a_1}}}\wedge\ldots\wedge\widehat{f_{i_{a_2},j_{a_2}}}\wedge\ldots\wedge f_{i_{c+2},j_{c+2}}.\nonumber
\end{align}
Again the expression with the $b$'s lives in the polynomial ring. Finally the maps on the second row are given by
\begin{align}
\bigwedge^mF\otimes V_n(-m)\longrightarrow&\bigwedge^{m-1}F\otimes V_{n+1}(-m+1):\hspace{10 pt}f_{i_1,j_1}\wedge\ldots\wedge f_{i_m,j_m}\otimes B_p\longmapsto\nonumber \\
&\sum_{a=0}^m(-1)^ab_{i_a,j_a-1}\cdot f_{i_1,j_1}\wedge\ldots\wedge\widehat{f_{i_a,j_a}}\wedge\ldots\wedge f_{i_m,j_m}\otimes B_{p+1}\nonumber \\
-&\sum_{a=0}^m(-1)^ab_{i_a,j_a}\cdot f_{i_1,j_1}\wedge\ldots\wedge\widehat{f_{i_a,j_a}}\wedge\ldots\wedge f_{i_m,j_m}\otimes B_p\nonumber
\end{align}
To define the map from $V_c$ to $M_c=M_{e_1,\ldots,e_\ell,c}$ note that the degree zero part of $M_c$ is just $V_c$ (as a vector space), so we map $B_p\in V_c$ to $b_{c,p}\in M_c$.\newline
See \cite[pp.\ 110-112]{schreyer} for more information on the Eagon-Northcott complex.
\section{The relative resolution}
We are interested in the projective variety $\PP^1\times\PP^1\rightarrow\PP^{N-1}$ introduced in the beginning of the article, depending on parameters $1\leq a\leq b$. Its homogeneous coordinate ring is
$$R_{a,b}=\bigoplus_{n\geq 0}V_{\{0,\ldots,na\}\times\{0,\ldots,nb\}},$$
Where  $V_{\{0,\ldots,na\}\times\{0,\ldots,nb\}}$ is a vector space whose basis elements we denote $b_{n,i,j}$ with $0\leq i\leq na$ and $0\leq i\leq nb$.\newline
This variety is contained in the rational normal scroll with $\ell=a+1$, $e_1=\ldots=e_\ell=b$. The corresponding surjective morphism of homogeneous coordinate rings is
\begin{align}
R_{\bar{b}}=\bigoplus_{n\geq 0}\Big(\bigoplus_{x_{i_1}\ldots x_{i_n}\in k[x_0,\ldots,x_a]}V_{i_1,\ldots,i_n}\Big)&\longrightarrow\bigoplus_{n\geq 0}V_{\{0,\ldots,na\}\times\{0,\ldots,nb\}}
\nonumber \\
b_{i_1,\ldots,i_n,j}&\longmapsto(i_1+\ldots+i_n,j)\label{scrollmap}
\end{align}
where $R_{\bar{b}}$ is $R_{b,\ldots,b}$ with $a+1$ copies of $b$. As you can see we have taken the monomials from $x_0$ to $x_a$ for convenience (rather than from $x_1$ to $x_{a+1}$). We now introduce the resolution of $R_{a,b}$ by $R_{\bar{b}}$-modules:
\begin{align}
M_{\bar{b},ab}\otimes\bigwedge^aG_{a}\otimes G_{a-1}(-a)&\longrightarrow\ldots\longrightarrow M_{\bar{b},3b}\otimes\bigwedge^3G_{a}\otimes G_2(-3)\longrightarrow\nonumber \\
M_{\bar{b},2b}\otimes\bigwedge^2G_{a}\otimes G_1(-2)&\longrightarrow R_{\bar{b}}\longrightarrow R_{a,b}\label{relres}
\end{align}
where $G_i$ is just a vector space with basis $g_1,\ldots,g_i$. Note that $\otimes G_1$ is redundant as $G_1$ is one-dimensional. The maps on the first row are given by
\begin{align}
M_{\bar{b},pb}\otimes\bigwedge^pG_a&\otimes G_{p-1}(-p)\longrightarrow M_{\bar{b},(p-1)b}\otimes\bigwedge^{p-1}G_a\otimes G_{p-2}(-p+1)\nonumber \\
&b_{i_1,\ldots,i_n,pb,j}\otimes g_{j_1}\wedge\ldots\wedge g_{j_p}\otimes g_h\longmapsto\nonumber \\
&\sum_{q=1}^p(-1)^qb_{i_1,\ldots,i_n,j_q-1,p(b-1),j}\otimes g_{j_1}\wedge\ldots\wedge\widehat{g_{j_q}}\wedge\ldots\wedge g_{j_p}\otimes g_{h-1}\nonumber \\
-&\sum_{q=1}^p(-1)^qg_{i_1,\ldots,i_n,j_q,p(b-1),j}\otimes g_{j_1}\wedge\ldots\wedge\widehat{g_{j_q}}\wedge\ldots\wedge g_{j_p}\otimes g_h.\nonumber
\end{align}
If the index of $g_h$ or $g_{h-1}$ is out of range then that term is deleted. This happens for $g_0$ and $g_p$.
The map from $M_{\bar{b},2b}\otimes\bigwedge^2G_{a}(-2)$ to $R_{\bar{b}}$ is given by
$$b_{i_1,\ldots,i_n,2b,j}\otimes g_{j_1}\wedge g_{j_2}\longmapsto b_{i_1,\ldots i_n,j_1-1,j_2,j}-b_{i_1,\ldots i_n,j_1,j_2-1,j}.$$
\begin{lemma}
The relative resolution (\ref{relres}) is exact.
\end{lemma}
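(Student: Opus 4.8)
The approach is to recognize the relative resolution~(\ref{relres}) as a direct sum of graded strands, each of which is a graded piece of the Eagon--Northcott resolution of the rational normal curve of degree~$a$ in $\PP^a$; that resolution is exact, being the case $\ell=1$, $e_1=a$, $c=0$ of the Eagon--Northcott complex recalled in Section~2 (it resolves $M_{a,0}=R_a=S'/I_2(M)$, where $S'=k[x_0,\dots,x_a]$ and $M$ is the $2\times a$ matrix whose $p$-th column is $(x_{p-1},x_p)$). To carry this out I would first identify $S'$ with the subring $k[b_{0,0},\dots,b_{a,0}]\subseteq R_{\bar b}$ via $x_i\leftrightarrow b_{i,0}$; then $R_{\bar b}$ and every $M_{\bar b,c}$ are graded $S'$-modules on which $b_{i,0}$ acts by appending the index $i$ to the monomial part and leaving the final index~$j$ fixed. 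Inspection of the displayed differentials of~(\ref{relres}) and of the augmentation $R_{\bar b}\to R_{a,b}$ shows that every map is $S'$-linear and carries the final index $j$ along unchanged, so in each internal degree $N$ the complex~(\ref{relres}) splits as a direct sum, over $0\le j\le Nb$, of finite-dimensional complexes of $k$-vector spaces, the ``$(N,j)$-strands''; it suffices to prove each of these, augmented to $R_{a,b}$, is exact.

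The key step is identifying the $(N,j)$-strand with the degree-$N$ strand of the Eagon--Northcott resolution of $R_a$. At homological position $i$ the term of~(\ref{relres}) is $M_{\bar b,(i+1)b}\otimes\bigwedge^{i+1}G_a\otimes G_i(-i-1)$, whose $j$-component in degree $N$ has $k$-basis the vectors $b_{\mu,(i+1)b,j}\otimes g_{p_1}\wedge\dots\wedge g_{p_{i+1}}\otimes g_h$ with $|\mu|=N-i-1$ --- the constraint $j\le|\mu|b+(i+1)b=Nb$ being exactly $N\ge j/b$. Under $g_p\leftrightarrow f_{1,p}$, $G_i\leftrightarrow V_{i-1}$ and $b_\mu\leftrightarrow$ (the monomial $\mu$) this matches the degree-$N$ part of the $i$-th Eagon--Northcott term $\bigwedge^{i+1}F\otimes V_{i-1}(-i-1)\cong S'(-i-1)^{i\binom{a}{i+1}}$, and the two differentials agree, since the Eagon--Northcott differential's ``remove the column $f_{1,p_q}$ and multiply by its entry $x_{p_q-1}$ (resp.\ $x_{p_q}$)'' is precisely the $b_\mu\mapsto b_{x_{p_q-1}\mu}$ (resp.\ $b_{x_{p_q}\mu}$) in the displayed map. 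Finally, the image of $M_{\bar b,2b}\otimes\bigwedge^2G_a(-2)$ in $R_{\bar b}$ is spanned by the $b_{x_{p_1-1}x_{p_2}\mu,j}-b_{x_{p_1}x_{p_2-1}\mu,j}$, i.e.\ the $2\times2$ minors of $M$ times monomials, so on the $(N,j)$-strand the cokernel of the last map is $(S'/I_2(M))_N$; this coincides with the $j$-component of $(R_{a,b})_N$ because the kernel of $R_{\bar b}\to R_{a,b}$ is spanned by the $b_{\mu,j}-b_{\nu,j}$ with $\mu,\nu$ of the same degree and the same index sum, and such binomials generate $I_2(M)$ (the standard presentation of the rational normal curve ideal). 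When $N<j/b$ both the strand and its target vanish. Hence every $(N,j)$-strand is exact, so~(\ref{relres}) is exact.

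The main obstacle is the bookkeeping in this middle step: reconciling the sign conventions and degree shifts of the explicitly written differential with those of the Eagon--Northcott complex of Section~2, and being careful that what one proves is a \emph{strand-by-strand} isomorphism, not a base change $\mathrm{EN}^\bullet\otimes_{S'}R_{\bar b}$ of complexes --- that would fail, since $M_{\bar b,(i+1)b}$ is not a direct sum of copies of $R_{\bar b}$, and the discrepancy is exactly the truncation phenomenon responsible for the vanishing of the strands with $N<j/b$.
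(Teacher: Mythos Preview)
Your proof is correct and follows essentially the same approach as the paper's: both identify each graded piece of the relative resolution with the corresponding graded piece of the Eagon--Northcott resolution of $R_a$ (the case $\ell=1$, $e_1=a$, $c=0$). The paper phrases the decomposition as factoring out the vector space $V_{nb}$ from the degree-$n$ component, which is exactly your splitting into $(N,j)$-strands; your explicit treatment of the augmentation to $R_{a,b}$ via $I_2(M)$ is a bit more detailed than the paper's, but the argument is the same.
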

\begin{proof}
The degree $n$ part of the resolution is
\begin{align}
&\Big(\bigoplus_{x_{i_1}\ldots x_{i_{n-a}}\in k[x_0,\ldots,x_a]}V_{i_1,\ldots,i_{n-a},ab}\Big)\otimes\bigwedge^aG_a\otimes G_{a-1}\longrightarrow\ldots\nonumber \\
\longrightarrow&\Big(\bigoplus_{x_{i_1}\ldots x_{i_{n-2}}\in k[x_0,\ldots,x_a]}V_{i_1,\ldots,i_{n-2},2b}\Big)\otimes\bigwedge^2G_{a}\otimes G_1\nonumber \\
\longrightarrow&\Big(\bigoplus_{x_{i_1}\ldots x_{i_n}\in k[x_0,\ldots,x_a]}V_{i_1,\ldots,i_n}\Big)
\longrightarrow V_{\{0,\ldots,na\}\times\{0,\ldots,nb\}}. \nonumber
\end{align}
We have to prove that this is exact. Now note that $V_{i_1,\ldots,i_{n-p},pb}$ is always isomorphic to $V_{nb}$; the isomorphism is given by $b_{i_1,\ldots,i_{n-p},pb,j}\mapsto b_{i_1,\ldots,i_n,j}$. In fact we can write this whole sequence as the tensor product of $V_{nb}$ with the following sequence:
$$
S^{n-a}(V_a)\otimes\bigwedge^aG_a\otimes G_{a-1}\longrightarrow\ldots\longrightarrow S^{n-2}(V_a)\otimes\bigwedge^2G_a
\longrightarrow S^n(V_a)\longrightarrow V_{na}.\nonumber
$$
Note that $S^nV_a$ is the degree $n$ part of $k[x_0,\ldots,x_a]$. So we have to prove that this sequence is exact. We reduce it to the exactness of the Eagon-Northcott complex by observing that this sequence is isomorphic to the degree $n$ part of the Eagon-Northcott complex of $R_a$ with $\ell=1$ and $e_1=a$. Note that $\bigwedge^{i+1}G_a\otimes G_i$ will correspond to the degree zero part of $\bigwedge^{i+1}F\otimes V_i$ and that the degree $n$ part consists of this tensored with the degree $n$ part of the polynomial ring in the variables $b_1,\ldots,b_{a+1}$, which is isomorphic to $S^n(V_a)$.
\end{proof}
\section{The iterated mapping cone}
In general whenever $R$ is a qoutient ring of a polynomial ring $S$ and $M$ is an $R$-module with a resolution
$P_n\rightarrow P_{n-1}\rightarrow\ldots\rightarrow P_0\rightarrow M$ of $R$-modules each of which has a free $S$-resolution $Q_{i,m}\rightarrow Q_{i,m-1}\rightarrow\ldots\rightarrow Q_{i,0}\rightarrow P_i$, then one can build a chain map
\begin{equation}\label{first_chain_map}
\begin{tikzcd}
P_n \arrow{r} & P_{n-1} \arrow{r} & \ldots \arrow{r} & P_0 \arrow{r} & M\\
Q_{n,0} \arrow{r}\arrow{u} & Q_{n-1,0} \arrow{u} & \ldots & Q_{0,0}\arrow{u} & \\
\vdots\arrow{u} & \vdots\arrow{u} & & \vdots\arrow{u} & \\
Q_{n,m-1} \arrow{r}\arrow{u} & Q_{n-1,m-1} \arrow{u} & \ldots & Q_{0,m-1}\arrow{u} & \\
Q_{n,m} \arrow{r}\arrow{u} & Q_{n-1,m} \arrow{u} & \ldots & Q_{0,m}\arrow{u} & 
\end{tikzcd}
\end{equation}
and take its mapping cone. If $I_{n-1}$ is the image of the morphism $P_{n-1}\rightarrow P_{n-2}$ then the following mapping cone gives a resolution of $I_{n-1}$:
\begin{center}
\begin{tikzcd}
P_n \arrow{r} & P_{n-1} \arrow{r} & I_{n-1}\\
Q_{n,0} \arrow{r}\arrow{u} & Q_{n-1,0} \arrow{r}\arrow{u} & Q_{n-1,0}\arrow{u} \\
Q_{n,1} \arrow{r}\arrow{u} & Q_{n-1,1} \arrow{r}\arrow{u} & Q_{n,0}\oplus Q_{n-1,1}\arrow{u} \\
\vdots\arrow{u} & \vdots\arrow{u} & \vdots\arrow{u} \\
Q_{n,m} \arrow{r}\arrow{u} & Q_{n-1,m} \arrow{r}\arrow{u} & Q_{n,m-1}\oplus Q_{n-1,m}\arrow{u} \\
 &  & Q_{n,m}\arrow{u} \\
\end{tikzcd}
\end{center}
One can then build a chain map from this resolution of $I_{n-1}$ to the resolution of $P_{n-2}$.
Again taking the mapping cone one gets a resolution of the image $I_{n-2}$ of $P_{n-1}\rightarrow P_{n-2}$:
\begin{center}
\begin{tikzcd}
I_{n-1} \arrow{r} & P_{n-2} \arrow{r} & I_{n-2}\\
Q_{n-1,0} \arrow{r}\arrow{u} & Q_{n-2,0} \arrow{r}\arrow{u} & Q_{n-2,0}\arrow{u} \\
Q_{n,0}\oplus Q_{n-1,1} \arrow{r}\arrow{u} & Q_{n-2,1} \arrow{r}\arrow{u} & Q_{n-1,0}\oplus Q_{n-2,1}\arrow{u} \\
Q_{n,1}\oplus Q_{n-1,2} \arrow{r}\arrow{u} & Q_{n-2,2} \arrow{r}\arrow{u} & Q_{n,0}\oplus Q_{n-1,1}\oplus Q_{n-2,2}\arrow{u} \\
\vdots\arrow{u} & \vdots\arrow{u} & \vdots\arrow{u}
\end{tikzcd}
\end{center}
If one keeps repeating this process one eventually gets a resolution of $M$. This resolution is not necessarily minimal, even if the resolutions of the $P_i$ are all minimal. To get the graded Betti table of $M$ all one has to do is tensor this resolution with the field $k$ over which we work and then take homology of the resulting complex.
The $p$-th module in this resolution is
$$C_p:=\bigoplus_{\max(0,p-n)\leq j\leq\min(p,m)} Q_{p-j,j}.$$
The map from $C_p$ to $C_{p-1}$ is given by a matrix of maps from $Q_{i,j}$ to $Q_{i',j'}$ where $i+j=p$ and $i'+j'=p-1$.
\begin{lemma}
Up to sign the map from $Q_{n,j}$ to $Q_{n-1,j}$ is given by our initial chain map from the resolution of $P_n$ to our resolution of $P_{n-1}$. 
\end{lemma}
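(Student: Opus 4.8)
The plan is to unwind the iterated mapping cone one step at a time, tracking only the single matrix entry in question, and to exploit that the resolution $Q_{n,\bullet}$ of the last module $P_n$ is introduced at the very first mapping cone step. Write $\phi_\bullet\colon Q_{n,\bullet}\to Q_{n-1,\bullet}$ for the chain map lifting $P_n\to P_{n-1}$ chosen at the start, and $\psi^{(1)},\dots,\psi^{(n-1)}$ for the comparison chain maps built in the subsequent steps, so that $\psi^{(k)}$ runs from the resolution of $I_{n-k}$ produced by the $k$-th mapping cone to the resolution $Q_{n-k-1,\bullet}$ of $P_{n-k-1}$. I will use the convention $\mathrm{Cone}(f)_q=A_{q-1}\oplus B_q$ with differential $(a,b)\mapsto(-d_A a,\,f(a)+d_B b)$ for a chain map $f\colon A_\bullet\to B_\bullet$; the precise signs will be irrelevant since only the statement up to sign is asserted.

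First I would record, by a one-line induction on $k$ using the cone formula, that after $k$ mapping cones the resolution of $I_{n-k}$ has $q$-th term $\bigoplus_{i=0}^{k}Q_{n-k+i,q-i}$ (with the convention that out-of-range $Q_{i,j}$ are zero). In particular $Q_{n,j}$ then sits in homological degree $j+k$ and is the unique summand of that term whose first index equals $n$. For $k=n$ this recovers $C_p=\bigoplus_j Q_{p-j,j}$, with $Q_{n,j}\subset C_{n+j}$ and $Q_{n-1,j}\subset C_{n+j-1}$, so the entry we must identify is indeed a genuine component of $d\colon C_{n+j}\to C_{n+j-1}$.

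The heart of the argument is the following claim, again proved by induction on $k$: in the resolution of $I_{n-k}$ the component of the differential from $Q_{n,j}$ to $Q_{n-1,j}$ equals $(-1)^{k-1}\phi_j$. For $k=1$ the resolution of $I_{n-1}$ is literally $\mathrm{Cone}(\phi)$, and the cone differential sends $a\in Q_{n,j}\subset\mathrm{Cone}(\phi)_{j+1}$ to $(-d_{Q_n}a,\phi_j(a))$, giving $\phi_j$ on the $Q_{n-1,j}$ coordinate. For the inductive step, the resolution of $I_{n-k-1}$ is $\mathrm{Cone}(\psi^{(k)})$; restricted to the embedded copy of the previous resolution its differential is $-1$ times the previous differential, together with $\psi^{(k)}$, and $\psi^{(k)}$ has target $Q_{n-k-1,\bullet}$, which is disjoint from every $Q_{n-1,\ast}$ because $n-k-1\le n-2$. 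Hence the entry $Q_{n,j}\to Q_{n-1,j}$ only picks up the extra sign $-1$, becoming $(-1)^{k}\phi_j$. Taking $k=n$ (the last mapping cone, which produces the resolution of $M$) shows that the $Q_{n,j}\to Q_{n-1,j}$ component of $d\colon C_{n+j}\to C_{n+j-1}$ is $(-1)^{n-1}\phi_j$, which is the assertion of the lemma.

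The only thing requiring care is the bookkeeping of homological degrees at each stage, together with the observation that $\phi$ is the only comparison map in the entire construction whose target involves $Q_{n-1,\bullet}$, so that no mapping cone step after the first can create or modify a component of the differential landing in $Q_{n-1,\bullet}$ (subsequent steps only add components into $Q_{n-2,\bullet},Q_{n-3,\bullet},\dots$). With that in place the induction is routine and I expect no real obstacle; in particular the undetermined global sign $(-1)^{n-1}$ is harmless, since only the component up to sign is claimed.
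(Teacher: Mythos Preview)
Your argument is correct and is essentially the same as the paper's: both proceed by unwinding the mapping cones one at a time, observing that the first cone installs $\phi_j$ as the $Q_{n,j}\to Q_{n-1,j}$ component, and that every later comparison map $\psi^{(k)}$ lands in $Q_{n-k-1,\bullet}$ with $n-k-1\le n-2$, so subsequent cones only flip the sign of that component without altering it. Your write-up is in fact more careful than the paper's (which sketches the same induction informally); the final sign you obtain is $(-1)^{n-1}$ rather than the paper's stated $(-1)^n$, but this is a harmless bookkeeping discrepancy since the lemma is only asserted up to sign.
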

\begin{proof}
This follows from the definition of the mapping cone. For instance the resolution of $I_{n-1}$ above has maps
$$Q_{n,j}\oplus Q_{n-1,j+1}\longrightarrow Q_{n,j-1}\oplus Q_{n-1,j},$$
where the map from $Q_{n-1,j+1}$ to $Q_{n,j-1}$ is zero, and the other maps are given by the diagram (\ref{first_chain_map}), except that the sign of the map $Q_{n,j}\rightarrow Q_{n,j-1}$ is flipped.
The maps in the chain map from the resolution of $I_{n-1}$ to the one of $P_{n-2}$ all go from a $Q_{n,j}\oplus Q_{n-1,j+1}$ to a $Q_{n-2,j+1}$. So the maps in the corresponding mapping cone consist only of the map from $Q_{n-2,j+2}$ to $Q_{n-2,j+1}$, the maps from $Q_{n,j}$ and $Q_{n-1,j+1}$ to $Q_{n-2,j+1}$ and the maps we already got from the previous mapping cone, but with the sign flipped. At the end of this process in the resolution of $M$ we still have the maps from $Q_{n,j}$ to $Q_{n-1,j}$, with sign $(-1)^n$.
\end{proof}
\section{The horizontal chain maps}
When we take the relative resolution
$$M_{\bar{b},ab}\otimes\bigwedge^aG_{a}\otimes G_{a-1}(-a)\longrightarrow
M_{\bar{b},(a-1)b}\otimes\bigwedge^{a-1}G_{a}\otimes G_{a-2}(-a+1)\longrightarrow\ldots$$
and we take a free resolution of each module in this resolution, we get something like this:
\begin{center}
\begin{tikzcd}
\underset{\otimes\bigwedge^aG_{a}\otimes G_{a-1}(-a)}{M_{\bar{b},ab}} \arrow{r} &
\underset{\otimes\bigwedge^{a-1}G_{a}\otimes G_{a-2}(-a+1)}{M_{\bar{b},(a-1)b}} \arrow{r} & \ldots \\
\underset{\otimes\bigwedge^aG_{a}\otimes G_{a-1}(-a)}{V_{ab}} \arrow[dotted]{r}\arrow{u} & 
\underset{\otimes\bigwedge^{a-1}G_{a}\otimes G_{a-2}(-a+1)}{V_{(a-1)b}} \arrow[dotted]{r}\arrow{u} & \ldots \\
\vdots\arrow{u} & \vdots\arrow{u} & \\
\underset{\otimes \bigwedge^aG_{a}\otimes G_{a-1}(-ab-a-b+1)}{\bigwedge^{(a+1)b-1}F\otimes V_{b-3}} \arrow[dotted]{r}\arrow{u} &
\underset{\otimes \bigwedge^{a-1}G_{a}\otimes G_{a-2}(-ab-a-b+2)}{\bigwedge^{(a+1)b-1}F\otimes V_{2b-3}} \arrow[dotted]{r}\arrow{u} & \ldots \\
\underset{\otimes \bigwedge^aG_{a}\otimes G_{a-1}(-ab-a-b)}{\bigwedge^{(a+1)b}F\otimes V_{b-2}}\arrow[dotted]{r}\arrow{u} & 
\underset{\otimes \bigwedge^{a-1}G_{a}\otimes G_{a-2}(-ab-a-b+1)}{\bigwedge^{(a+1)b}F\otimes V_{2b-2}}\arrow[dotted]{r}\arrow{u} & \ldots \\
\end{tikzcd}
\end{center}
Now we have to construct the dotted maps so that the diagram commutes. We will not compute all maps of the iterated mapping cone as this is complicated and not necessary for our purposes. We just compute a chain map from each resolution of $M_{\bar{b},pb}\otimes\bigwedge^pG_{a}\otimes G_{p-1}(-p)$ to the next resolution. This does not give a double complex because the composition of two consecutive horizontal maps is not zero. Put $c=pb$. Each module $M_{\bar{b},c}\otimes\bigwedge^pG_{a}\otimes G_{p-1}(-p)$ is a direct sum of copies of $M_{\bar{b},c}(-p)$ indexed by expressions of the form
$g_{i_1}\wedge\ldots\wedge g_{i_p}\otimes g_j$
which form a basis of $\bigwedge^pG_{a}\otimes G_{p-1}$.
This means that every map $M_{\bar{b},c}\otimes\bigwedge^pG_{a}\otimes G_{p-1}(-p)\rightarrow M_{\bar{b},c-b}\otimes\bigwedge^{p-1}G_{a}\otimes G_{p-2}(-p+1)$ can be viewed as a matrix whose entries are maps from $M_{\bar{b},c}(-p)$ to $M_{\bar{b},c-b}(-p+1)$. It is therefore enough to construct a chain map corresponding to each (non-zero) entry of this matrix. Each such map from $M_{\bar{b},c}(-p)$ to $M_{\bar{b},c-b}(-p+1)$ is given by
$$\beta_{j_0}(b_{i_1,\ldots,i_n,c,j})=b_{i_1,\ldots,i_n,j_0,c-b,j}$$
for some $j_0\in\{0,\ldots,a\}$. So we have to construct the dotted maps in the following diagram:
\begin{center}
\begin{tikzcd}
M_{\bar{b},c} \arrow{r}{\beta_{i_0}} & M_{\bar{b},c-b} \\
V_{c} \arrow[dotted]{r}{\alpha_{i_0,0}}\arrow{u} & V_{c-b}\arrow{u} \\
\bigwedge^1F\otimes V_{c-1} \arrow[dotted]{r}{\alpha_{i_0,1}}\arrow{u} &
\bigwedge^1F\otimes V_{c-b-1} \arrow{u}\\
\bigwedge^2F\otimes V_{c-2}\arrow[dotted]{r}{\alpha_{i_0,2}}\arrow{u} & 
\bigwedge^2F\otimes V_{c-b-2} \arrow{u}\\
\vdots\arrow{u} & \vdots\arrow{u} & \\
\end{tikzcd}
\end{center}
where we have suppressed the degree shifts from our notation.
Note that it is enough to define these maps on the basis elements, because these are free modules. For $\alpha_{j_0,0}$ we set
$$\alpha_{i_0,0}(B_j)=\begin{cases}
b_{i_0,j}B_0& \text{if }j\leq b \\
b_{i_0,b}B_{j-b}& \text{if }j\geq b
\end{cases}$$
Recall that we are working over the polynomial ring whose variables are the degree one elements $b_{i,j}$ of $R_{\bar{b}}$. This expression is scalar multiplication of such a variable with a basis element of the free module $V_{c-b}$ with basis $B_0,\ldots,B_{c-b}$. The above square commutes because $b_{i_0,j}B_0$ gets mapped to $b_{i_0,j}b_{c-b,0}=b_{i_0,c-b,j}=\beta_{i_0}(b_{c,j})$ and $b_{i_0,b}B_{j-b}$ gets mapped to $b_{i_0,b}b_{c-b,j-b}=b_{i_0,c-b,j}=\beta_{i_0}(b_{c,j})$, both in $M_{\bar{b},c-b}$.\newline
Now for any $0\leq n\leq c-b$ we define $\alpha_{i_0,n}$ as follows:
$$\alpha_{i_0,n}(f_{i_1,j_1}\wedge\ldots\wedge f_{i_n,j_n}\otimes B_j)=$$
$$\begin{cases}
b_{i_0,j} f_{i_1,j_1}\wedge\ldots\wedge f_{i_n,j_n}\otimes B_0+ &\\
\sum_{\ell=1}^n(-1)^\ell b_{i_\ell,j_\ell-1}f_{i_0,j+1}\wedge f_{i_1,j_1}\wedge\ldots\wedge\widehat{f_{i_\ell,j_\ell}}\wedge\ldots\wedge f_{i_n,j_n}\otimes B_0& \text{if } j<b \\
b_{i_0,b}f_{i_1,j_1}\wedge\ldots\wedge f_{i_n,j_n}\otimes B_{j-b}& \text{if }j\geq b
\end{cases}$$
And for $c-b<n\leq c$ we define
$\alpha_{i_0,n}:\bigwedge^nF\otimes V_{c-n}\rightarrow\bigwedge^{n+1}F\otimes V_{n-c+b-1}$ as follows:
$$
\alpha_{i_0,n}(f_{i_1,j_1}\wedge\ldots\wedge f_{i_n,j_n}\otimes B_j)=
$$
\begin{equation}\label{horizontalmap}
(-1)^{n-c+b-1}\sum_{\ell=0}^{n-c+b-1}f_{i_0,j+\ell+1}\wedge f_{i_1,j_1}\wedge\ldots\wedge f_{i_n,j_n}\otimes B_{\ell}
\end{equation}
For $n>c$ we don't explicitly construct the horizontal maps as we don't need them. (Of course they exist.)
Verifying that the squares commute is left as an exercise to the reader, except for the following square which we will treat as an example:
\begin{center}
\begin{tikzcd}
\bigwedge^{c-b}F\otimes V_b \arrow{r}{\alpha_{i_0,c-b}} &
\bigwedge^{c-b}F\otimes V_0 \\
\bigwedge^{c-b+1}F\otimes V_{b-1}\arrow{r}{\alpha_{i_0,c-b+1}}\arrow{u} & 
\bigwedge^{c-b+2}F\otimes V_0 \arrow{u}
\end{tikzcd}
\end{center}
We begin in the lower left corner. Let $f_{i_1,j_1}\wedge\ldots\wedge f_{i_{c-b+1},j_{c-b+1}}\otimes B_j\in \bigwedge^{c-b+1}F\otimes V_{b-1}$. We will abbreviate $f_{i_1,j_1}\wedge\ldots\wedge\widehat{f_{i_\ell,j_\ell}}\wedge\ldots\wedge f_{i_{c-b+1},j_{c-b+1}}$ by $\ldots\wedge\widehat{f_{i_\ell,j_\ell}}\wedge\ldots$.
When we go to the upper left we get:
$$\sum_{\ell=1}^{c-b+1}(-1)^\ell(b_{i_\ell,j_\ell-1}\ldots\wedge\widehat{f_{i_\ell,j_\ell}}\wedge\ldots\otimes B_{k+1}
-b_{i_\ell,j_\ell}\ldots\wedge\widehat{f_{i_\ell,j_\ell}}\wedge\ldots\otimes B_k)$$
When we go to the upper right, we have to treat two cases: $j<b-1$ and $j=b-1$. Before we do so we introduce the following notation: for any two integers $m,l$ we set
$$\epsilon_{m,l}=\begin{cases}
1 &\text{if }m<l \\
0 &\text{if }m=l \\
-1&\text{if }m>l.
\end{cases}$$
We now treat the first case. When we go to the upper right we get:
\begin{align}
&\sum_{\ell=1}^{c-b+1}(-1)^\ell( b_{i_0,j+1}b_{i_\ell,j_\ell-1}\ldots\wedge\widehat{f_{i_\ell,j_\ell}}\wedge\ldots\otimes B_0\nonumber \\
&\hspace{50 pt}-b_{i_0,j}b_{i_\ell,j_\ell}\ldots\wedge\widehat{f_{i_\ell,j_\ell}}\wedge\ldots\otimes B_0)\nonumber \\
&+\sum_{\ell,m=1}^{c-b+1}(-1)^{\ell+m}\epsilon_{m,l}b_{i_\ell,j_\ell-1}b_{i_m,j_m-1}f_{i_0,j+2}\wedge
\ldots\wedge\widehat{f_{i_\ell,j_\ell}}\wedge\widehat{f_{i_m,j_m}}\wedge\ldots\otimes B_0 \label{epsilon} \\
&-\sum_{\ell,m=1}^{c-b+1}(-1)^{\ell+m}\epsilon_{m,l}b_{i_\ell,j_\ell}b_{i_m,j_m-1}f_{i_0,j+1}\wedge
\ldots\wedge\widehat{f_{i_\ell,j_\ell}}\wedge\widehat{f_{i_m,j_m}}\wedge\ldots\otimes B_0 \nonumber
\end{align}
Now the term (\ref{epsilon}) vanishes as swapping $m$ and $\ell$ gives terms that cancel. In fact in the second case we get the exact same expression but without the term (\ref{epsilon}).
If we again start in the lower left corner and then go to the right we get 
$$f_{i_0,j+1}\wedge f_{i_1,j_1}\wedge\ldots\wedge f_{i_{c-b+1},j_{c-b+1}}\otimes B_0,$$
and when we then go to the upper right and put $j_0=j+1$ we get
\begin{align}
&\sum_{0\leq a_1<a_2\leq c-b+1}(-1)^{a_1+a_2}(b_{i_{a_1},j_{a_1}}b_{i_{a_2},j_{a_2}-1}-b_{i_{a_1},j_{a_1}-1}b_{i_{a_2},j_{a_2}})
\nonumber \\
\cdot &f_{i_0,j_0}\wedge\ldots\wedge\widehat{f_{i_{a_1},j_{a_1}}}\wedge\ldots\wedge\widehat{f_{i_{a_2},j_{a_2}}}\wedge\ldots\wedge f_{i_{c-b+1},j_{c-b+1}}\otimes B_0.\nonumber
\end{align}
One checks that this is equal to the previous expression.
\section{Using the resolution}
From the resolution of $R_{a,b}$ one can in theory compute the entire graded Betti table and the bidegree table of each entry. One does this by tensoring   the resolution with the field $k$ over which we work and then taking homology. If we assign a bidegree decomposition to each free module in the resolution such that the maps preserve it, then we also get the bidegree table of each entry. Here we are interested in the $K_{p,1}$ which correspond to the degree $p+1$ part of the homology at place $p$.
\begin{equation}\label{mapoplus1}
\bigoplus_{\max(0,p+1-n)\leq j\leq\min(p+1,m)} Q_{p-j+1,j}\otimes k\longrightarrow
\end{equation}
\begin{equation}\label{mapoplus2}
\bigoplus_{\max(0,p-n)\leq j\leq\min(p,m)} Q_{p-j,j}\otimes k\longrightarrow\bigoplus_{\max(0,p-1-n)\leq j\leq\min(p-1,m)} Q_{p-j-1,j}\otimes k
\end{equation}
Where $m=(a+1)b-1$ and $n=a-1$. Each $Q_{p-j,j}$ with $p-j>0$ is a direct sum of copies of
\begin{align}
&\bigwedge^jF\otimes V_{(p-j+1)b-j}(-p-1)\hspace{43 pt}\text{if }j\leq(p-j+1)b,\nonumber \\
&\bigwedge^{j+1}F\otimes V_{j-(p-j+1)b-1}(-p-2)\hspace{30 pt}\text{if }j>(p-j+1)b.\nonumber
\end{align}
Now only those $Q_{p-j,j}$ with a degree shift $(-p-1)$ can contribute to $K_{p,1}$, as $K_{p,1}$ is the degree $p+1$ part of the homology of (\ref{mapoplus1}), (\ref{mapoplus2}). Moreover, the only way for a degree-preserving map from $Q_{p-j,j}$ to $Q_{p-1-j',j'}$ to not become zero upon tensoring with $k$ is if both have a degree shift $(-p-1)$, because otherwise basis elements of the domain get mapped to linear combinations of basis elements of the codomain where the coefficients have positive degree, and hence die when tensoring with $k$, killing the map. If $p-1-j'=0$ then the degree shift of $Q_{p-1-j',j'}$ is $(-p)$, while that of  $Q_{p-j,j}$ is $(-p-1)$ or $(-p-2)$, so then the map certainly becomes zero upon tensoring with $k$ by the same argument.\newline Therefore we only have to take into account morphisms from $Q_{p-j,j}$ to $Q_{p-1-j',j'}$ where 
\begin{equation}\label{nonzeromap}
j\leq(p-j+1)b\text{, }j'>(p-j')b\text{ and }p-1-j'>0.
\end{equation}
Combining the first inequality with the fact that $j\geq p-n$ and $p-j\leq n=a-1$ gives $p-n\leq ab$. So if $p>ab+a-1$ then the maps (\ref{mapoplus1}) and (\ref{mapoplus2}) are both zero and hence $K_{p,1}$ is just the degree $p+1$ part of the direct sum of the $Q_{p-j,j}\otimes k$. This is only non-zero for $Q_{0,p}$, because it has degree shift $(-p-1)$ and all the other $Q_{p-j,j}$ have degree shift $(-p-2)$. So we have
\begin{proposition}\label{usingres1}
For $p>ab+a-1$ we have $K_{p,1}\cong Q_{0,p}\otimes k=\bigwedge^{p+1}(F\otimes k)\otimes V_{p-1}$, which has dimension $p\binom{(a+1)b}{p+1}$.
\end{proposition}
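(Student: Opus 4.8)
The plan is to combine the degree-shift bookkeeping already set up above with the rank of an exterior power. By construction $K_{p,1}$ is the degree-$(p+1)$ part of the homology at position $p$ of $C_\bullet\otimes k$, i.e. of the complex $C_{p+1}\otimes k\to C_p\otimes k\to C_{p-1}\otimes k$ displayed in $(\ref{mapoplus1})$ and $(\ref{mapoplus2})$. Recall that for $p-j>0$ the summand $Q_{p-j,j}$ of $C_p$ is a direct sum of copies of $\bigwedge^jF\otimes V_{(p-j+1)b-j}(-p-1)$ when $j\le(p-j+1)b$ and of $\bigwedge^{j+1}F\otimes V_{j-(p-j+1)b-1}(-p-2)$ when $j>(p-j+1)b$, while $Q_{0,p}$ (the $p$-th term of the Eagon--Northcott resolution of the scroll ring $R_{\bar b}=M_{\bar b,0}$, i.e. of the general Eagon--Northcott complex with $c=0$) equals $\bigwedge^{p+1}F\otimes V_{p-1}(-p-1)$. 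Since the relative resolution $(\ref{relres})$ has length $n=a-1$, every summand of $C_p$ with $p-j>0$ satisfies $1\le p-j\le a-1$, hence $j\ge p-a+1$ and $(p-j+1)b\le ab$; under the hypothesis $p>ab+a-1$ this gives $j\ge p-a+1>ab\ge(p-j+1)b$, so all those summands carry the shift $(-p-2)$ and contribute nothing in degree $p+1$ after tensoring with $k$. Thus the only part of $C_p\otimes k$ sitting in degree $p+1$ is $Q_{0,p}\otimes k$.

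Next I would verify that the two maps adjacent to $C_p$ are zero on degree-$(p+1)$ parts, so that $Q_{0,p}\otimes k$ is all of $K_{p,1}$. For the incoming map $(\ref{mapoplus1})$: every summand $Q_{i,j}$ of $C_{p+1}$ (so $i+j=p+1$) has shift $(-p-2)$ or $(-p-3)$, so the degree-$(p+1)$ part of $C_{p+1}\otimes k$ is already zero. For the outgoing map $(\ref{mapoplus2})$: its only possibly nonzero restriction to degree $p+1$ is on $Q_{0,p}\otimes k$; but the differentials of the iterated mapping cone only have components $Q_{i,j}\to Q_{i',j'}$ with $i'\le i$, so the component of the differential out of $Q_{0,p}$ can land only in $Q_{0,p-1}$, which has shift $(-p)$ and dies after tensoring with $k$. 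Hence $K_{p,1}\cong Q_{0,p}\otimes k=\bigwedge^{p+1}(F\otimes k)\otimes V_{p-1}$. Since $F$ is free of rank $e_1+\dots+e_\ell=(a+1)b$ and $V_{p-1}$ is free of rank $p$, this vector space has dimension $p\binom{(a+1)b}{p+1}$ (with the convention that this is $0$ once $p+1>(a+1)b$, matching $Q_{0,p}=0$ there).

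No step is genuinely hard; the one point requiring care is the inequality chain $j\ge p-a+1>ab\ge(p-j+1)b$ --- exactly where the hypothesis $p>ab+a-1$ is used --- together with the structural observation that the iterated mapping cone differential never raises the first index of a summand, which is what ensures that $Q_{0,p}$ has no outgoing component surviving the passage to $k$.
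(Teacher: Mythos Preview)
Your argument is correct and follows essentially the same route as the paper: both reduce to the degree-shift bookkeeping that forces every summand $Q_{p-j,j}$ with $p-j>0$ into shift $(-p-2)$ once $p>ab+a-1$, leaving only $Q_{0,p}\otimes k$ in degree $p+1$. The one place where you are actually more careful than the paper is in handling the outgoing map: you invoke the triangular structure of the iterated mapping cone (components $Q_{i,j}\to Q_{i',j'}$ only for $i'\le i$) to see that the sole target of $Q_{0,p}$ is $Q_{0,p-1}$, whereas the paper leaves this point implicit.
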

This proves part $(i)$ of theorem \ref{firstrow}. We now study the case $p=ab+a-1$.\newline
We have to take the kernel of the map (\ref{mapoplus2}) divided by the image of the map (\ref{mapoplus1}). The latter is zero by the argument above, so we just have to take the kernel of the map (\ref{mapoplus2}). Let's study this map.\newline
For $p-j<n$ we still cannot have the first inequality in (\ref{nonzeromap}), so we only have to look at the maps from $Q_{n,p-n}$ to $Q_{p-j',j'}$. The kernel of the map (\ref{mapoplus2}) is the direct sum of $Q_{0,p}\otimes k$ with the kernel of the map
$$Q_{n,p-n}\otimes k\longrightarrow\bigoplus_{\max(0,p-1-n)\leq j\leq\min(p-1,m)} Q_{p-j-1,j}\otimes k$$
In stead of computing this latter kernel we will just compute the kernel of the map from $Q_{n,p-n}$ to $Q_{n-1,p-n}$ which contains it. This will then give an upper bound on $\kappa_{p,1}$ for $p=ab+a-1$. By lemma \ref{first_chain_map} this map is the first chain map, which we computed in the previous section. It comes from the first map in the relative resolution and the horizontal maps $\alpha_{j_0,p-n}$ from (\ref{horizontalmap}):
\footnotesize
\begin{align}
&\bigwedge^{ab}F\otimes\bigwedge^aG_{a}\otimes G_{a-1}(-p-1)\longrightarrow\bigwedge^{ab+1}F\otimes V_{b-1}\otimes\bigwedge^{a-1}G_{a}\otimes G_{a-2}(-p-1):\nonumber \\
&f_{i_1,j_1}\wedge\ldots\wedge f_{i_{ab},j_{ab}}\otimes g_1\wedge\ldots\wedge g_a\otimes g_h\longmapsto\nonumber \\
&\sum_{q=1}^a(-1)^{q+b-1}\sum_{\ell=0}^{b-1}f_{q-1,\ell+1}\wedge f_{i_1,j_1}\wedge\ldots\wedge f_{i_{ab},j_{ab}}\otimes B_{\ell}\otimes g_1\wedge\ldots\wedge\widehat{g_q}\wedge\ldots\wedge g_a\otimes g_{h-1}\nonumber \\
-&\sum_{q=1}^a(-1)^{q+b-1}\sum_{\ell=0}^{b-1}f_{q,\ell+1}\wedge f_{i_1,j_1}\wedge\ldots\wedge f_{i_{ab},j_{ab}}\otimes B_{\ell}\otimes g_1\wedge\ldots\wedge\widehat{g_q}\wedge\ldots\wedge g_a\otimes g_h.\nonumber
\end{align}
\normalsize
If $g_{h-1}$ or $g_h$ is out of range then that term is deleted. We have to tensor this map with $k$, so we interpret the $B_\ell$, the $f_{i,j}$ and the $g_h$ as vector space basis elements rather than free module basis elements. But we can change the map without changing the dimension of the kernel. One immediate simplification is to remove $\bigwedge^aG_{a}$ as it is one-dimensional:
\begin{align}
&\bigwedge^{ab}F\otimes G_{a-1}\longrightarrow\bigwedge^{ab+1}F\otimes V_{b-1}\otimes G_{a}\otimes G_{a-2}:\nonumber \\
&f_{i_1,j_1}\wedge\ldots\wedge f_{i_{ab},j_{ab}}\otimes g_h\longmapsto\nonumber \\
&\sum_{q=1}^a(-1)^{q+b-1}\sum_{\ell=0}^{b-1}f_{q-1,\ell+1}\wedge f_{i_1,j_1}\wedge\ldots\wedge f_{i_{ab},j_{ab}}\otimes B_{\ell}\otimes g_q\otimes g_{h-1}\nonumber \\
-&\sum_{q=1}^a(-1)^{q+b-1}\sum_{\ell=0}^{b-1}f_{q,\ell+1}\wedge f_{i_1,j_1}\wedge\ldots\wedge f_{i_{ab},j_{ab}}\otimes B_{\ell}\otimes g_q\otimes g_h.\nonumber
\end{align}
Another simplification is to remove the factor $(-1)^{q+b-1}$ as the $b$ is a constant and the $(-1)^q$ can be removed by mapping each basis element $g_q\in G_a$ to $(-1)^qg_q$, which is obviously a linear automorphism of $G_a$.
\begin{align}
&\bigwedge^{ab}F\otimes G_{a-1}\longrightarrow\bigwedge^{ab+1}F\otimes V_{b-1}\otimes G_{a}\otimes G_{a-2}:\nonumber \\
&f_{i_1,j_1}\wedge\ldots\wedge f_{i_{ab},j_{ab}}\otimes g_h\longmapsto\nonumber \\
&\sum_{q=1}^a\sum_{\ell=0}^{b-1}f_{q-1,\ell+1}\wedge f_{i_1,j_1}\wedge\ldots\wedge f_{i_{ab},j_{ab}}\otimes B_{\ell}\otimes g_q\otimes g_{h-1}\nonumber \\
-&\sum_{q=1}^a\sum_{\ell=0}^{b-1}f_{q,\ell+1}\wedge f_{i_1,j_1}\wedge\ldots\wedge f_{i_{ab},j_{ab}}\otimes B_{\ell}\otimes g_q\otimes g_h.\nonumber
\end{align}
Recall that the terms where $g_h$ or $g_{h-1}$ is out of range are removed. We now do a switch in notation: For any subset $S$ of $\ZZ^2$ we denote by $V_S$ a vector space with basis $S$ and we denote the basis element corresponding to $P\in S$ by $v_P$. So $F\otimes k$ can be identified with $V_{\{0,\ldots,a\}\times\{0,\ldots,b-1\}}$ so that $f_{i,j}$ corresponds to $v_{(i,j-1)}$. We also identify $V_{b-1}\otimes G_{a}$ with $V_{\{0,\ldots,a-1\}\times\{0,\ldots,b-1\}}$
where $B_{\ell}\otimes g_q$ corresponds to $v_{(q-1,\ell)}$. We get
\begin{align}
&\bigwedge^{ab}V_{\{0,\ldots,b-1\}\times\{0,\ldots,a\}}\otimes G_{a-1}\nonumber \\
\longrightarrow&\bigwedge^{ab+1}V_{\{0,\ldots,a\}\times\{0,\ldots,b-1\}}\otimes V_{\{0,\ldots,b-1\}\times\{0,\ldots,a-1\}}\otimes G_{a-2}:\nonumber \\
&v_{P_1}\wedge\ldots\wedge v_{P_{ab}}\otimes g_h\nonumber \\
\longmapsto&\sum_{P\in \{0,\ldots,a-1\}\times\{0,\ldots,b-1\}}v_{P}\wedge v_{P_1}\wedge\ldots\wedge v_{P_{ab}}\otimes v_P\otimes g_{h-1}\nonumber \\
-&\sum_{P\in \{0,\ldots,a-1\}\times\{0,\ldots,b-1\}}v_{P+(1,0)}\wedge v_{P_1}\wedge\ldots\wedge v_{P_{ab}}\otimes v_P\otimes g_h\nonumber
\end{align}
We do one final modification: $\bigwedge^{ab}V_{\{0,\ldots,a\}\times\{0,\ldots,b-1\}}$ is isomorphic to\newline $\bigwedge^{p-ab}V_{\{0,\ldots,a\}\times\{0,\ldots,b-1\}}$ by mapping each wedge product of the $v_P$ with distinct points $P$ in some set $S$ to the wedge product of those points not occurring in $S$. We similarly replace $\bigwedge^{ab+1}V_{\{0,\ldots,a\}\times\{0,\ldots,b-1\}}$ with $\bigwedge^{b-1}V_{\{0,\ldots,a\}\times\{0,\ldots,b-1\}}$. Note that these identifications depend on an ordering of the points of $\{0,\ldots,a\}\times\{0,\ldots,b-1\}$. Up to sign the map becomes:
\begin{align}
&\bigwedge^bV_{\{0,\ldots,a\}\times\{0,\ldots,b-1\}}\otimes G_{a-1}\nonumber \\
\longrightarrow&\bigwedge^{b-1}V_{\{0,\ldots,a\}\times\{0,\ldots,b-1\}}\otimes V_{\{0,\ldots,a-1\}\times\{0,\ldots,b-1\}}\otimes G_{a-2}:\nonumber \\
&v_{P_1}\wedge\ldots\wedge v_{P_b}\otimes g_h\nonumber \\
\longmapsto&\sum_{l=1}^b(-1)^lv_{P_1}\wedge\ldots\wedge\widehat{v_{P_l}}\wedge\ldots\wedge v_{P_b}\otimes v_{P_l}\otimes g_{h-1}\nonumber \\
-&\sum_{l=1}^b(-1)^lv_{P_1}\wedge\ldots\wedge\widehat{v_{P_l}}\wedge\ldots\wedge v_{P_b}\otimes v_{P_l-(1,0)}\otimes g_h\nonumber
\end{align}
where any terms that contain a $P_l$ or $P_{l-1}$ that is not in $\{0,\ldots,a-1\}\times\{0,\ldots,b-1\}$ are deleted. As before terms where $g_{h-1}$ or $g_h$ is not well defined are also deleted.
\begin{lemma}
A basis of the kernel of this map is given by the following expressions
\begin{equation}\label{kernelexpression}
\sum_{0\leq i_0,\ldots,i_{b-1}\leq a}v_{(i_0,0)}\wedge\ldots\wedge v_{(i_{b-1},b-1)}\otimes g_{h-i_0-\ldots-i_{b-1}}
\end{equation}
where $h\in\ZZ$ such that there is at least one term. The sum is over all choices of $0\leq i_0,\ldots,i_{b-1}\leq a$ such that $1\leq h-i_0-\ldots-i_{b-1}\leq a-1$.
\end{lemma}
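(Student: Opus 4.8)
The plan is to establish both inclusions, organised around the integer grading in which the domain basis element $v_{P_1}\wedge\dots\wedge v_{P_b}\otimes g_h$ has degree $N=h+\sum_l(\text{first coordinate of }P_l)$. Reading off the two sums in the definition of the map, every term on the right has $N$ one less than on the left, so the map is homogeneous of degree $-1$ and $\ker=\bigoplus_N\ker_N$. The candidate (\ref{kernelexpression}) with parameter $h$ is supported in degree $N=h$; since moreover each monomial $v_{(i_0,0)}\wedge\dots\wedge v_{(i_{b-1},b-1)}\otimes g_k$ occurs in at most one of these expressions (namely the one with $h=k+\sum i_j$), the candidates are linearly independent, and proving the lemma reduces to: (a) each candidate lies in the kernel, and (b) $\dim\ker_N\le 1$ for every $N$.

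For (a) I would argue by direct cancellation. Fix a target basis monomial $v_{R_1}\wedge\dots\wedge v_{R_{b-1}}\otimes v_Q\otimes g_m$. It can only be hit by terms extracting a point $P_l$ in the row not represented among the $R_i$, namely the row of $Q$; this pins down all first-coordinate data of the relevant domain monomial. One then checks that within a single candidate $w_h$ exactly one term contributes through the first sum (with $P_l=Q$) and exactly one through the second (with $P_l=Q+(1,0)$); these two source monomials differ only in the extracted coordinate and only by $1$, so after deleting that factor they leave the same wedge, and the two contributions have equal magnitude and opposite sign. The $v$-index deletions never spoil the pairing because $Q$ already lies in $\{0,\dots,a-1\}\times\{0,\dots,b-1\}$, and a $g$-index deletion kills both partners at once.

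For (b) I would decompose the domain by the row distribution $\mathbf d=(d_0,\dots,d_{b-1})$ recording how many of the $P_l$ lie in each row. Extracting a point from row $j$ decreases only $d_j$ and deposits a $V'$-point in row $j$, so the map is a sum $\sum_j E_j$ of row-wise operators whose images, for different pairs $(\mathbf d,j)$, occupy pairwise distinct summands of the target; hence $\ker=\bigoplus_{\mathbf d}\bigcap_{j:d_j\ge 1}\ker E_j$. Each $E_j$ is the identity on the remaining rows tensored with a single-row operator $\Psi^{(d_j)}\colon\bigwedge^{d_j}V\otimes G_{a-1}\to\bigwedge^{d_j-1}V\otimes V'\otimes G_{a-2}$, $\omega\otimes g_c\mapsto\partial^\alpha\omega\otimes g_{c-1}-\partial^\beta\omega\otimes g_c$, where $V$ is the span of one row, $\partial^\alpha,\partial^\beta$ are the total contraction composed respectively with $v_i\mapsto v_i$ ($i\le a-1$) and $v_i\mapsto v_{i-1}$ ($i\ge 1$). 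If $\mathbf d\ne(1,\dots,1)$ then some $d_j\ge 2$, and I claim $\Psi^{(d)}$ is injective for all $d\ge 2$, so that block contributes nothing. If $\mathbf d=(1,\dots,1)$ then, identifying $D_{(1,\dots,1)}\cong V^{\otimes b}\otimes G_{a-1}$, one computes that $\ker\Psi^{(1)}$ is the span of the vectors $\sum_{i+c=N}v_i\otimes g_c$, and intersecting the conditions $E_j(t)=0$ over all $j$ forces the coefficient of $v_{i_0}\otimes\dots\otimes v_{i_{b-1}}\otimes g_c$ to be invariant under $i_j\mapsto i_j\pm1,\,c\mapsto c\mp1$ for every $j$; since the lattice points of the relevant box-slice $\{\sum i_j+c=N\}$ are connected under these moves, that coefficient depends only on $N$, i.e.\ $t$ is a combination of the $w_h$. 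Together with (a) this gives $\dim\ker_N=1$ exactly when a valid $h=N$ exists, and $0$ otherwise.

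The step I expect to be the real obstacle is the injectivity of $\Psi^{(d)}$ for $2\le d\le a+1$ over an arbitrary field. A leading-term argument — ordering the target first by the $g$-index, then by the index of the $V'$-factor, then by the wedge monomial — handles the interior, but at the two boundary values $c=1$ and $c=a-1$ exactly one of the two summands $\partial^\alpha\omega\otimes g_{c-1}$, $\partial^\beta\omega\otimes g_c$ survives, and the surviving leading terms for consecutive values of $c$ can coincide; breaking this tie (by refining the order, or by analysing the truncated Eagon--Northcott-type complex of a single row directly) is the delicate point. The connectivity statement invoked in the $\mathbf d=(1,\dots,1)$ case is routine by comparison but should still be spelled out.
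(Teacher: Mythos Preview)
Your decomposition by the row–distribution $\mathbf d$ is sound, and the reduction to the single–row operators $\Psi^{(d)}$ is correct: the images for different $(\mathbf d,j)$ really do land in disjoint summands, and on the stratum $\mathbf d=(1,\dots,1)$ the connectivity argument you outline goes through without difficulty. The one genuine gap is exactly where you place it: the injectivity of $\Psi^{(d)}$ for $2\le d\le a+1$. Your leading–term approach runs into the boundary problem you describe, and it is not clear how to repair it along those lines.

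The paper closes precisely this gap, but by a different and much simpler mechanism than a term order. It first proves a ``move'' claim for the full map: if $v_{P_1}\wedge\dots\wedge v_{P_b}\otimes g_h$ occurs in $x\in\ker f$ with coefficient~$\lambda$, then for each $i$ the neighbour obtained by $P_i\mapsto P_i\pm(1,0)$, $h\mapsto h\mp1$ also occurs with coefficient~$\lambda$, \emph{whenever that neighbour is a valid basis vector}. The proof is by isolating the unique target monomial hit by the two sums and observing that the only other domain monomial producing it is the neighbour. The crucial extra observation is that if the neighbour is \emph{not} valid because the shifted point collides with another $P_j$, then the target monomial has a single preimage and one concludes $\lambda=0$ outright. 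With this in hand the paper walks any two points sharing a row toward each other: alternate the $+$ move on the left point (costing $h\to h-1$) with the $-$ move on the right point (costing $h\to h+1$); since $a\ge3$ one of the two moves is always available, the gap between the two first coordinates drops by~$1$ at each step, and the process terminates in a collision, giving $\lambda=0$.

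Restricted to a single row, this walking/collision argument is exactly the injectivity of your $\Psi^{(d)}$ for $d\ge2$, valid over any field and free of the boundary subtlety. So your structural packaging is a clean reorganisation of the paper's proof, but the substantive step you are missing is the same one the paper uses directly; replacing your leading–term sketch by the collision argument completes your proof.
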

\begin{proof}
Let us call the map $f$. We assume $a\geq 3$. We leave it as an exercise to the reader to show that these expressions are in $\ker f$. Let $x\in\ker f$, we will show it is of this form.\newline \underline{claim:} For any term $v_{P_1}\wedge\ldots\wedge v_{P_b}\otimes g_h$ occurring in $x$ with coefficient $\lambda$ the terms $v_{P_1}\wedge\ldots\wedge v_{P_i\pm(1,0)}\wedge\ldots\wedge v_{P_b}\otimes g_{h\mp 1}$ also occur in $x$ with coefficient $\lambda$, at least those that are in $\bigwedge^bV_{\{0,\ldots,a\}\times\{0,\ldots,b-1\}}\otimes G_{a-1}$. We now prove this claim. So let $1\leq i\leq b$. Applying $f$ to the term we get
\begin{align}
&\sum_{i=1}^b(-1)^iv_{P_1}\wedge\ldots\wedge\widehat{v_{P_i}}\wedge\ldots\wedge v_{P_{b}}\otimes v_{P_i}\otimes g_{h-1}\nonumber \\
-&\sum_{i=1}^b(-1)^iv_{P_1}\wedge\ldots\wedge\widehat{v_{P_i}}\wedge\ldots\wedge v_{P_{b}}\otimes v_{P_i-(1,0)}\otimes g_h.\nonumber
\end{align}
Suppose $P_i+(1,0)\in\{0,\ldots,a\}\times\{0,\ldots,b-1\}$ and $g_{h-1}\in G_a$, then the term $(-1)^iv_{P'_1}\wedge\ldots\wedge\widehat{v_{P'_i}}\wedge\ldots\wedge v_{P'_{b}}\otimes v_{P'_i+(1,0)}\otimes g_{h'}$ actually occurs in the above expression, so it has to cancel against something because $f(x)=0$. But the only thing it can cancel against is a term
$$-(-1)^iv_{P'_1}\wedge\ldots\wedge\widehat{v_{P'_i}}\wedge\ldots\wedge v_{P'_{b}}\otimes v_{P'_i-(1,0)}\otimes g_{h'}$$
occurring in $f(v_{P'_1}\wedge\ldots\wedge v_{P'_b}\otimes g_{h'})$ with $h'=h-1$, $P'_i=P_i+(1,0)$ and $P'_j=P_j$ for $j\neq i$. Therefore the term $$v_{P'_1}\wedge\ldots\wedge v_{P'_b}\otimes g_{h'}=v_{P_1}\wedge\ldots\wedge_{P_i+(1,0)}\wedge\ldots\wedge v_{P_b}\otimes g_{h-1}$$
must occur in $x$ with coefficient $\lambda$. By an similar argument the term $v_{P_1}\wedge\ldots\wedge_{P_i-(1,0)}\wedge\ldots\wedge v_{P_b}\otimes g_{h+1}$ must also occur in $x$ with coefficient $\lambda$ if $P_i-(1,0)\in\{0,\ldots,a\}\times\{0,\ldots,b-1\}$ and $g_{h+1}\in G_a$, proving the claim.\newline
We now show that any term of $x$ is of the form $v_{P_1}\wedge\ldots\wedge v_{P_b}\otimes g_h$ where all the $P_i$ have distinct second coordinate. Because if $P_1$ and $P_2$ have the same second coordinate, say $P_1=(d_1,c)$ and $P_2=(d_2,c)$ with $d_1<d_2$ then by the claim at least one of $v_{(d_1+1,c)}\wedge v_{(d_2,c)}\wedge\ldots\wedge v_{P_b}\otimes g_{h-1}$ and $v_{(d_1,c)}\wedge v_{(d_2-1,c)}\wedge\ldots\wedge v_{P_b}\otimes g_{h+1}$ occurs in $x$. In the first case one repeatedly applies the claim and all of the following terms must occur in $x$:
\begin{align}
&v_{(d_1+1,c)}\wedge v_{(d_2-1,c)}\wedge\ldots\wedge v_{P_b}\otimes g_{h}\nonumber \\
&v_{(d_1+2,c)}\wedge v_{(d_2-1,c)}\wedge\ldots\wedge v_{P_b}\otimes g_{h-1}\nonumber \\
&v_{(d_1+2,c)}\wedge v_{(d_2-2,c)}\wedge\ldots\wedge v_{P_b}\otimes g_{h}\nonumber
\end{align}
etcetera. After a finite number of steps the first and second wedge factor coincide, producing a contradiction. The second case is handled similarly.\newline We conclude that all of the $P_i$ in any term of $x$ have distinct second coordinate, so any term of $x$ may be written as
$$v_{(l_0,0)}\wedge\ldots\wedge v_{(l_{b-1},b-1)}\otimes g_h.$$
By applying the claim again we will prove that any two expressions of this form with the same value of $l_0+\ldots+l_{b-1}+h$ must have the same coefficient in $x$. If we can prove this we are almost done. So let 
$$v_{(l'_0,0)}\wedge\ldots\wedge v_{(l'_{b-1},b-1)}\otimes g_{h'},\hspace{20 pt}l_0+\ldots+l_{b-1}+h=l'_0+\ldots+l'_{b-1}+h'$$
be another such expression and suppose for instance that $h'>h$. Then we can repeatedly add one to $h$ while substracting 1 from some $l_i$ that is greater than $l'_i$, until $h$ is equal to $h'$. By the claim this process does not change the coefficient of this term in $x$. If the first expression is not yet equal to the second expression then we choose an $i$ such that $l_i<l'_i$, we add one to $l_i$ while substracting one from $h$, then we choose a $j$ such that $l_j>l'_j$ and we substract 1 from $l_j$ while adding 1 to $h$. We keep repeating this process until both expressions are the same. The conclusion is that all terms occurring in (\ref{kernelexpression}) have the same coefficient in $x$, so $x$ is a linear combination of expressions like (\ref{kernelexpression}). That these expressions are linearly independent follows from the fact that they have no terms in common.
\end{proof}
\begin{proposition}\label{usingres2}
If $3\leq a\leq b$ and $p=ab+a-1$ then
$$\kappa_{p,1}\leq p\binom{(a+1)b}{p+1}+p.$$
\end{proposition}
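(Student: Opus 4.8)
The plan is to reduce the desired bound to a count of integers, assembling ingredients already developed above. First I would record that for $p=ab+a-1$ the map (\ref{mapoplus1}) is zero: by the analysis via (\ref{nonzeromap}), a nonzero entry of the differential $C_{p+1}\to C_p$ forces $(p+1)-n\leq ab$, i.e.\ $p\leq ab+a-2$, which fails here. Consequently $\kappa_{p,1}=\dim_k\ker(\ref{mapoplus2})$.

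Next, as established above, in degree $p+1$ the source of (\ref{mapoplus2}) is $Q_{n,p-n}\otimes k\oplus Q_{0,p}\otimes k$ — the remaining summands $Q_{p-j,j}$ of $C_p$ carry degree shift $(-p-2)$ and so do not meet the degree $p+1$ part — and $Q_{0,p}\otimes k$ lies in the kernel, so
$$\ker(\ref{mapoplus2})=\big(Q_{0,p}\otimes k\big)\oplus K,$$
where $K$ is the kernel of the component
$$Q_{n,p-n}\otimes k\longrightarrow\bigoplus_{\max(0,p-1-n)\leq j\leq\min(p-1,m)}Q_{p-j-1,j}\otimes k$$
of the differential. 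Composing with the projection onto the summand $Q_{n-1,p-n}\otimes k$ of the target gives $K\subseteq\ker\big(Q_{n,p-n}\otimes k\to Q_{n-1,p-n}\otimes k\big)$, so it suffices to bound the dimension of this last kernel. By the lemma identifying (up to sign) the map $Q_{n,j}\to Q_{n-1,j}$ with the first chain map, together with its explicit description and the sequence of kernel-dimension-preserving simplifications carried out above, this kernel has the same dimension as $\ker f$, where $f$ is the map in the statement of the preceding lemma.

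Then I would apply that lemma: $\dim_k\ker f$ equals the number of integers $h$ for which the expression (\ref{kernelexpression}) has at least one term, i.e.\ for which there exist $0\leq i_0,\ldots,i_{b-1}\leq a$ with $1\leq h-i_0-\cdots-i_{b-1}\leq a-1$. Since the sums $i_0+\cdots+i_{b-1}$ with $i_0,\ldots,i_{b-1}\in\{0,\ldots,a\}$ realize every integer in $\{0,1,\ldots,ab\}$, such $h$ are precisely those with $1\leq h\leq ab+a-1=p$; hence $\dim_k\ker f=p$ and $\dim_k K\leq p$. Since Proposition \ref{usingres1} identifies $Q_{0,p}\otimes k$ with $\bigwedge^{p+1}(F\otimes k)\otimes V_{p-1}$, of dimension $p\binom{(a+1)b}{p+1}$ (an identification that does not use the hypothesis $p>ab+a-1$), adding the two contributions gives
$$\kappa_{p,1}=\dim_k(Q_{0,p}\otimes k)+\dim_k K\leq p\binom{(a+1)b}{p+1}+p,$$
as desired.

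I do not anticipate a serious obstacle: the substantive work — constructing the resolution, the horizontal chain maps, and computing $\ker f$ — is already complete. The two points needing care are the bookkeeping of which summands of $C_p$ and $C_{p-1}$ genuinely interact, which (\ref{nonzeromap}) already handles, and the elementary observation that $i_0+\cdots+i_{b-1}$ realizes every value in $\{0,\ldots,ab\}$.
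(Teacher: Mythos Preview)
Your proposal is correct and follows essentially the same route as the paper's own proof: reduce to the kernel of (\ref{mapoplus2}), split off $Q_{0,p}\otimes k$, bound the remaining piece by the kernel of the single horizontal map $Q_{n,p-n}\to Q_{n-1,p-n}$, and count the basis elements (\ref{kernelexpression}). Your bookkeeping is in fact slightly more explicit than the paper's (you spell out why only $Q_{n,p-n}$ and $Q_{0,p}$ contribute in degree $p+1$ and why the count of admissible $h$ is exactly $p$), but there is no substantive difference in strategy.
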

\begin{proof}
By the discussion after proposition \ref{usingres1} we know that $K_{p,1}$ is the direct sum of $Q_{0,p}\otimes k$ and something contained in the kernel of the map that we modified and whose kernel we computed in the previous lemma. Now $Q_{0,p}=\bigwedge^{p+1}F\otimes V_{p-1}(-p-1)$, so $Q_{0,p}\otimes k$ has dimension $p\binom{(a+1)b}{p+1}$. It remains to compute the number of expressions (\ref{kernelexpression}) in the previous lemma. an integer $h$ gives a valid expression if and only if there exist $0\leq j_0,\ldots,j_{b-1}\leq a-1$ such that $1\leq h-j_0-\ldots-j_{b-1}\leq a$, which happens if and only if $1\leq h\leq a(b+1)-1$. This means that the kernel of the map has dimension $a(b+1)-1=p$.
\end{proof}
Note that the conclusion is false when $a=2$. So how did we use that $a\geq 3$? We computed a horizontal chain map from the $a-1$-th module to the $a-2$-th module of the relative resolution. Now the zero-th module is of a different kind than all the other modules. When $a=2$ the chain map goes from the first module to the zero-th module, and in fact the chain map becomes zero when tensoring with $k$.
\section{Explicit Koszul cocycles}
At the end of this section we will finally prove theorem \ref{firstrow}.
Let $M$ be a graded module over a polynomial ring $R$ over a field $k$. There are two ways of computing $K_{p,q}(M):=(\textup{Tor}^p_R(M,k))_{p+q}$. By taking a free graded resolution of $M$, then tensoring with $k$ and taking homology, or by taking a free graded resolution of $k$, tensoring with $M$ and taking homology (see \cite[Theorem 1.b.4 p.\ 133]{greenkoszul}). Denote by $V$ the degree one part of $R$. The following diagram can be used to prove the equivalence of both definitions.
\begin{equation}\label{stairs}
\begin{tikzcd}
\ddots & \vdots\arrow{d}& \vdots\arrow{d}& \vdots\arrow{d}& \vdots\arrow{d} \\
\ldots\arrow{r}& \bigwedge^2V \otimes P_2\arrow{r}\arrow{d}& \bigwedge^2V\otimes P_1\arrow{r}\arrow{d}& \bigwedge^2V\otimes P_0\arrow{r}\arrow{d}& \bigwedge^2V\otimes M \arrow{d}\\
\ldots\arrow{r}& V\otimes P_2\arrow{r}\arrow{d}& V\otimes P_1\arrow{r}\arrow{d}& V\otimes P_0\arrow{r}\arrow{d}& V\otimes M\arrow{d} \\
\ldots\arrow{r}& P_2\arrow{r}\arrow{d}& P_1\arrow{r}\arrow{d}& P_0\arrow{r}\arrow{d}& M\arrow{d} \\
\ldots\arrow{r}& k\otimes P_2\arrow{r}& k\otimes P_1\arrow{r}& k\otimes P_0\arrow{r}& k\otimes M
\end{tikzcd}
\end{equation}
This diagram is obtained by tensoring the resolution $\ldots\rightarrow P_2\rightarrow P_1\rightarrow P_0\rightarrow M$ with the resolution $\ldots\rightarrow \bigwedge^2V\otimes R\rightarrow V\otimes R\rightarrow R\rightarrow k$. Actually we deleted some degree shifts from our notation. The idea is that the homology of the bottom line is isomorphic to that of the rightmost column, as one can prove with some diagram chasing.\newline
We will use this diagram to construct explicit Koszul cocycles of the $K_{p,1}$ for $p=ab+a-1$ and we will use these to prove that the inequality of proposition \ref{usingres2} is an equality.
The idea is to embed the $K_{p,1}$ of the rational normal scrolls $\PP^a\times\PP^1$ and $\PP^1\times\PP^b$ into the $K_{p,1}$ of $\PP^1\times\PP^1$. We denote their homogeneous coordinate rings $R_{\bar{b}}$, $R_{\bar{a}}$ and $R_{a,b}$ respectively. So we are projectively embedding our $\PP^1\times\PP^1\rightarrow\PP^{N-1}$ into two rational normal scrolls of dimension $a+1$ and $b+1$ respectively. By \cite[Corollary 1.26, $(ii)$ p.\ 8]{nagelaprodu} the induced morphisms
$$K_{p,1}(R_{\bar{a}})\longrightarrow K_{p,1}(R_{a,b}),\hspace{20 pt}K_{p,1}(R_{\bar{b}})\longrightarrow K_{p,1}(R_{a,b})$$
are injective. This is because the first two horizontal maps in the following diagram are isomorphisms
\begin{center}
\begin{tikzcd}
\bigwedge^{p+1}V\otimes(R_{\bar{b}})_0 \arrow{r}\arrow{d}& \bigwedge^pV\otimes(R_{\bar{b}})_1 \arrow{r}\arrow{d}& \bigwedge^{p-1}V\otimes(R_{\bar{b}})_2 \arrow{d} \\
\bigwedge^{p+1}V\otimes (R_{a,b})_0 \arrow{r}& \bigwedge^pV\otimes (R_{a,b})_1 \arrow{r}& \bigwedge^{p-1}V\otimes (R_{a,b})_2
\end{tikzcd}
\end{center}
hence the induced map on homology is injective.
We now compute an explicit basis of $R_{\bar{b}}$ using the Eagon-Northcott resolution. the same can then be done with $R_{\bar{a}}$. Let $f_{i_1,j_1}\wedge\ldots\wedge f_{i_{p+1},j_{p+1}}\otimes B_j$ be an element of $P_p\otimes k=\bigwedge^{p+1} F\otimes V_{p-1}\otimes k$. We now navigate the diagram (\ref{stairs}) until we arrive at an element of $\bigwedge^{p-1}V\otimes (R)_2=\bigwedge^{p-1}V\otimes S^2V$.
So we will explicitly compute the isomorphism
$$\bigwedge^{p+1} F\otimes V_{p-1}\otimes k\cong\ker\Big(\bigwedge^{p-1}V\otimes S^2V\rightarrow\bigwedge^{p-1}V\otimes(R_{\bar{a}})_2\Big),$$
which is isomorphic to the Koszul cohomology.
The journey we will make can be visualized in the following diagram which is obtained from the diagam (\ref{stairs}) by taking the degree $p+1$ part of every object, (taking into account the degree shifts that aren't shown):
\small
\begin{center}
\begin{tikzcd}
 \bigwedge^3 V\otimes \bigwedge^{p-2}F\otimes V_{p-4}\arrow{r}\arrow[hook]{dr}& \ldots\\
 \bigwedge^2V\otimes \bigwedge^{p-1}F\otimes V_{p-3} \arrow{r}\arrow[hook]{dr}& \bigwedge^2V\otimes V\otimes \bigwedge^{p-2}F\otimes V_{p-4} \\
 V\otimes \bigwedge^{p}F\otimes V_{p-2} \arrow{r}\arrow[hook]{dr}& V\otimes V\otimes \bigwedge^{p-1}F\otimes V_{p-3}  \\
\bigwedge^{p+1}F\otimes V_{p-1}\arrow[hook]{dr}{=}\arrow{r} & V\otimes \bigwedge^{p}F\otimes V_{p-2} \\
& k\otimes \bigwedge^{p+1}F\otimes V_{p-1} 
\end{tikzcd}
\end{center}
\normalsize
We start from the bottom and go to the top. When we go to $V\otimes \bigwedge^{p}F\otimes V_{p-2}$ we get
\begin{align}
&\sum_l(-1)^lb_{i_l,j_l-1}\otimes f_{i_1,j_1}\wedge\ldots\wedge\widehat{f_{i_l,j_l}}\wedge\ldots\wedge f_{i_{p+1},j_{p+1}}\otimes B_{j-1}\nonumber \\
-&\sum_l(-1)^lb_{i_l,j_l}\otimes f_{i_1,j_1}\wedge\ldots\wedge\widehat{f_{i_l,j_l}}\wedge\ldots\wedge f_{i_{p+1},j_{p+1}}\otimes B_{j}.\nonumber
\end{align}
We now give the general formula for the element of $\bigwedge^mV\otimes\bigwedge^{p-m+1}F\otimes V_{p-m-1}$:
\begin{align}
&\sum_{e_1,\ldots,e_m\in\{0,1\}}\sum_{l_1>\ldots>l_m\geq 1}^{p+1}(-1)^{e_1+\ldots+e_m+l_1+\ldots+l_m}b_{i_{l_1},j_{l_1}-e_1}\wedge\ldots\wedge b_{i_{l_m},j_{l_m}-e_m}\nonumber \\
&\otimes f_{i_1,j_1}\wedge\ldots\wedge\widehat{f_{i_{l_m},j_{l_m}}}\wedge\ldots\wedge\widehat{f_{i_{l_1},j_{l_1}}}\wedge\ldots\wedge f_{i_{p+1},j_{p+1}}\otimes B_{j-e_1-\ldots-e_m}.\nonumber
\end{align}
Here we only take those $e_1,\ldots,e_m\in\{0,1\}$ such that $0\leq j-e_1-\ldots-e_m\leq p-m-1$ so that $B_{j-e_1-\ldots-e_m}$ is well defined. When $m=p-1$ and we go from $\bigwedge^{p-1}V\otimes\bigwedge^2F$ to $\bigwedge^{p-1}V\otimes S^2V$ we get
\begin{align}
&\sum_{e_1,\ldots,e_{p-1}\in\{0,1\}}\sum_{l_1>\ldots>l_{p-1}\geq 1}^{p+1}(-1)^{j+l_1+\ldots+l_{p-1}}b_{i_{l_1},j_{l_1}-e_1}\wedge\ldots\wedge b_{i_{l_{p-1}},j_{l_{p-1}}-e_{p-1}}\nonumber \\
&\otimes b_{i_{l'_1},j_{l'_1}-1}b_{i_{l'_2},j_{l'_2}}-b_{i_{l'_1},j_{l'_1}}b_{i_{l'_2},j_{l'_2}-1},\nonumber
\end{align}
where the sum is over all $e_1,\ldots e_{p-1}\in\{0,1\}$ summing up to $j$, and $l'_1<l'_2$ are the elements of $\{1,\ldots,p+1\}$ missing from the list $l_1,\ldots,l_{p-1}$. With diagram chasing one proves that these expressions are linearly independent, because they come from linearly independent elements of $\bigwedge^{p+1}F\otimes V_{p-1}$. So we will apply this to two rational normal scrolls with homogeneous coordinate rings $R_{\bar{b}}$ and $R_{\bar{a}}$ respectively. We now change notation: for any finite set $S$ of points we write $V_S$ for a vector space with a basis whose elements we denote by $v_P$, for all $P\in S$. For any set $\{P_1,\ldots,P_{p+1}\}$ of distinct points in $\{0,\ldots,a\}\times\{0,\ldots,b-1\}$ and any $0\leq j\leq p-1$ we have an element
\begin{align}
&\underset{e_1+\ldots+e_{p-1}=(0,j)}{\sum_{e_1,\ldots,e_{p-1}\in\{(0,0),(0,1)\}}}\sum_{l_1>\ldots>l_{p-1}\geq 1}^{p+1}(-1)^{l_1+\ldots+l_{p-1}}v_{P_{l_1}+e_1}\wedge\ldots\wedge v_{P_{l_{p-1}}+e_{p-1}}\nonumber \\
&\otimes v_{P_{l'_1}+(0,1)}v_{P_{l'_2}}-v_{P_{l'_1}}v_{P_{l'_2}+(0,1)}\label{scrollexpression1}
\end{align}
of $\bigwedge^{p-1}V\otimes S^2V$, and for any $0\leq j\leq p-1$ and some fixed ordering $P_1,\ldots,P_{p+1}$ of the elements of $\{0,\ldots,a-1\}\times\{0,\ldots,b\}$ we also have an element
\begin{align}
&\underset{e_1+\ldots+e_{p-1}=(j,0)}{\sum_{e_1,\ldots,e_{p-1}\in\{(0,0),(1,0)\}}}\sum_{l_1>\ldots>l_{p-1}\geq 1}^{p+1}(-1)^{l_1+\ldots+l_{p-1}}v_{P_{l_1}+e_1}\wedge\ldots\wedge v_{P_{l_{p-1}}+e_{p-1}}\nonumber \\
&\otimes v_{P_{l'_1}+(1,0)}v_{P_{l'_2}}-v_{P_{l'_1}}v_{P_{l'_2}+(1,0)}\label{scrollexpression2}
\end{align}
of $\bigwedge^{p-1}V\otimes S^2V$. Note that $\{0,\ldots,a-1\}\times\{0,\ldots,b\}$ has $p-1$ elements, so we just have to fix an ordering of the points.
\begin{lemma}\label{independent}
The elements of $\bigwedge^{p-1}V\otimes S^2V$ in (\ref{scrollexpression1}) and (\ref{scrollexpression2}) are linearly independent.
\end{lemma}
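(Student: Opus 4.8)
I would exploit two pieces of structure on $\bigwedge^{p-1}V\otimes S^2V$ at once. First, the torus action on $\PP^1\times\PP^1$ gives a $\ZZ^2$‑grading ($v_{(i,j)}$ in degree $(i,j)$), and all the maps used so far are homogeneous for it. Second, for a monomial $M$ of $\bigwedge^{p-1}V\otimes S^2V$ (the $p-1$ wedge factors together with the two symmetric factors, so $p+1$ points counted with multiplicity) let $\mathrm{fc}(M)$ be the multiset of first coordinates of those $p+1$ points, and let $W\subseteq\bigwedge^{p-1}V\otimes S^2V$ be the span of the monomials $M$ for which no value in $\{0,\dots,a\}$ occurs more than $b$ times in $\mathrm{fc}(M)$. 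Since $W$ and every $\ZZ^2$‑weight space are spanned by weight‑homogeneous monomials, $W$ is a graded subspace. The two facts I would record are: (i) every monomial of an expression (\ref{scrollexpression1}) has the same first‑coordinate multiset, namely that of the underlying $(p+1)$‑subset of $\{0,\dots,a\}\times\{0,\dots,b-1\}$ (adding $(0,1)$ to a point leaves its first coordinate unchanged), and a column of $\{0,\dots,a\}\times\{0,\dots,b-1\}$ has only $b$ points, so every expression (\ref{scrollexpression1}) lies in $W$; (ii) every monomial of a fixed expression (\ref{scrollexpression2}) has the same $\ZZ^2$‑weight, equal to $\sum_{P\in\{0,\dots,a-1\}\times\{0,\dots,b\}}P+(j+1)(1,0)$, so as $j$ runs over $\{0,\dots,p-1\}$ the expressions (\ref{scrollexpression2}) have pairwise distinct weights.

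\textbf{Reduction.} Write $\alpha_k$ for the expressions (\ref{scrollexpression1}) and $\beta_j$ for the expressions (\ref{scrollexpression2}), and suppose $\sum_k c_k\alpha_k+\sum_j d_j\beta_j=0$. Fix $j$ and project onto the weight space of $\beta_j$; by (ii) only the term $d_j\beta_j$ survives among the $\beta$'s, leaving $\big(\sum_k c_k\alpha_k\big)_{w_j}+d_j\beta_j=0$, where $w_j$ is that weight. By (i) each $\alpha_k$ lies in $W$, and $W$ is graded, so $\big(\sum_k c_k\alpha_k\big)_{w_j}\in W$; hence $d_j\beta_j\in W$, and therefore $d_j=0$ as soon as we know $\beta_j\notin W$. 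Once all $d_j$ vanish we get $\sum_k c_k\alpha_k=0$, and all $c_k=0$ because the $\alpha_k$ are linearly independent (established in the text, since they are the images of a basis of $\bigwedge^{p+1}F\otimes V_{p-1}$). So the whole statement reduces to: no $\beta_j$ lies in $W$.

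\textbf{The key claim.} To show $\beta_j\notin W$ I would exhibit, for each $j$, a single monomial of $\beta_j$ with coefficient $\pm1$ (hence nonzero over any $k$) that is not in $W$. From the formula (\ref{scrollexpression2}) the first‑coordinate multiset of a monomial of $\beta_j$ is obtained from $\{0^{\,b+1},1^{\,b+1},\dots,(a-1)^{\,b+1}\}$ by increasing $j+1$ of the entries by $1$ (the $j$ shifted wedge factors and the shifted factor in the symmetric part). I would take the monomial in which the $j+1$ shifted points are the ``top'' ones: all $b+1$ points of the column $\{a-1\}\times\{0,\dots,b\}$ first (if $j+1\ge b+1$), then successively those of column $\{a-2\}$, and so on, and I would further insist that the point that is shifted inside the symmetric factor lie in column $\{a-1\}$, so that it becomes a point with first coordinate $a$. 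If a whole column was shifted, $\mathrm{fc}$ of this monomial contains $b+1$ copies of $a$; if $j+1\le b$, nothing left column $\{0\}$, so $\mathrm{fc}$ contains $b+1$ copies of $0$. Either way the monomial is bad, i.e.\ not in $W$, so $\beta_j\notin W$.

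\textbf{Main obstacle.} The delicate point is that this monomial occurs in the sum (\ref{scrollexpression2}) \emph{exactly once}, so that its coefficient is genuinely $\pm1$ and not an integer that could cancel or vanish mod the characteristic. Here is where the greedy choice pays off: a wedge (or symmetric) factor with first coordinate $a$ can only have arisen by shifting, which pins down both points of the symmetric factor and the sign of the term; and because every column that was touched was shifted \emph{completely}, there is no ambiguity ``was this wedge point shifted or not'', so one recovers exactly the index set $\{l_1>\dots>l_{p-1}\}$ and the vector $e$ producing it. I expect the weight computation in (ii), the compatibility of $W$ with the grading, and the reduction to be routine, with essentially all the work concentrated in this no‑cancellation bookkeeping for the greedily‑chosen monomial.
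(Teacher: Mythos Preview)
Your approach is essentially the paper's: separate the expressions (\ref{scrollexpression2}) by bidegree, show that every expression (\ref{scrollexpression1}) lies in a subspace cut out by a column constraint (your $W$ is a mild repackaging of the paper's Claims~1 and~2, which speak of a full column in the \emph{wedge} part rather than the first-coordinate multiset of all $p+1$ factors), and then exhibit for each $\beta_j$ a single monomial outside that subspace surviving with coefficient $\pm1$.

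The one real inaccuracy is in your no-cancellation sketch. The assertion that ``every column that was touched was shifted completely'' is only true when $j+1$ is a multiple of $b+1$; in general the last touched column is only partially shifted, so as stated your reconstruction does not go through. The correct argument is a cascade: the monomial points with first coordinate $a$ must all be shifted, which (when $j\ge b$) exhausts the original column $a-1$ and forces any monomial points with first coordinate $a-1$ to come from column $a-2$, and so on until the $j+1$ shifts are accounted for; when $j<b$ the column-$a$ points already absorb all shifts. The paper carries out essentially this, analysing for each fixed second coordinate how the monomial points in that row can arise. You also leave the location of the unshifted symmetric factor unspecified, so your witness monomial is not yet pinned down; any concrete choice works once the cascade argument is in place.
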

\begin{proof}
We already remarked that the expressions (\ref{scrollexpression1}) are linearly independent. Now $\bigwedge^{p-1}V\otimes S^2V$ can be decomposed as a direct sum $\bigoplus_{u\in\ZZ^2}(\bigwedge^{p-1}V\otimes S^2V)_u$ of bidegree components. So $(\bigwedge^{p-1}V\otimes S^2V)_u$ is generated by the $P_1\wedge\ldots\wedge P_{p-1}\otimes Q_1Q_2$ that satisfy $P_1+\ldots+P_{p-1}+Q_1+Q_2=u$. So all we have to prove is that for every $u$ all expressions like (\ref{scrollexpression1}) and (\ref{scrollexpression2}) that are in $(\bigwedge^{p-1}V\otimes S^2V)_u$ are linearly independent. But for those in (\ref{scrollexpression2}) the bidegree is always distinct: $$u=(0,j+1)+\sum_{P\in\{0,\ldots,a-1\}\times\{0,\ldots,b\}}P=(a(a-1)(b+1)/2+j+1,ab(b+1)/2).$$
Therefore it is enough to prove that no expression as in (\ref{scrollexpression2}) is linearly generated by the expressions in (\ref{scrollexpression1}).\newline
If we can prove the following two claims we are done:\newline
\underline{Claim 1:} Every expression in (\ref{scrollexpression2}) has a term which contains all points of some column of $\{0,\ldots,a\}\times\{0,\ldots,b\}$ in its wedge part.\newline
\underline{Claim 2} No expression as in (\ref{scrollexpression1}) has a term which contains all points of some column of $\{0,\ldots,a\}\times\{0,\ldots,b\}$ in its wedge part.\newline
We now prove claim 1 by constructing such a term for every $0\leq j\leq p-1$. We order the points of $\{0,\ldots,a-1\}\times\{0,\ldots,b\}$ by putting $P_{y+(b+1)x+1}=(x,y)$ for $0\leq y\leq b$ and $0\leq x<a$. We set $e_1=\ldots=e_{p-j-1}=(0,0)$ and $e_{p-j}=\ldots=e_{p-1}=(1,0)$ and for $l'_1$ and $l'_2$ we take $p-j$ and $p-j+1$. This determines a term of (\ref{scrollexpression2}) and the set of points occurring in the wedge part is
\begin{equation}\label{rectangleset}
\{(x,y)\in\{0,\ldots,a\}\times\{0,\ldots,b\}|y+(b+1)x<p-j-1\text{ or }y+(b+1)x>p+b-j+1\}.
\end{equation}
If $j>b$ then the column consisting of $(a,0),\ldots,(a,b)$ is contained in this set. If $j<p-b-1$ then the column consisting of $(0,0),\ldots,(0,b)$ is contained in this set. At least one of these inequalities is satisfied for every $j$ since $b+1<p-b-1$. Here we use that $3\leq a\leq b$. To prove claim 1 it remains to show that this term does not cancel against any other term of the expression. The set of points in the wedge part of any non-zero term in the expression (\ref{scrollexpression2}) can be obtained by starting from the set $\{0,\ldots,a-1\}\times\{0,\ldots,b\}$, removing two points and moving some of the remaining points one to the right (adding (1,0) to them) without any collisions. For any $y\in\{0,\ldots,b\}$ the set of points $(x,y)$ in the set (\ref{rectangleset}) is of the form $\{(0,y),\ldots,(a,y)\}\backslash\{(x_1,y),\ldots,(x_2,y)\}$ with $0\leq x_2-x_1\leq 2$, which can only come from $\{(0,y),\ldots,(a-1,y)\}\backslash\{(x_1,y),\ldots,(x_2-1,y)\}$ (or $\{(0,y),\ldots,(a-1,y)\}$ if $x_1=x_2$) by moving $(x_2,y),\ldots,(a-1,y)$ to the right. So the term (\ref{rectangleset}) can only be obtained in one way, and hence cannot cancel against another term.
For instance in the following picture with $a=b=3$
\begin{center}
\begin{tikzpicture}
\fill[black] (.5,.5) circle (2 pt);
\fill[black] (.5,0) circle (2 pt);
\fill[black] (2,.5) circle (2 pt);
\fill[black] (2,0) circle (2 pt);
\fill[black] (1.5,1) circle (2 pt);
\fill[black] (2,1) circle (2 pt);
\fill[black] (.5,1.5) circle (2 pt);
\fill[black] (.5,1) circle (2 pt);
\fill[black] (1.5,1.5) circle (2 pt);
\fill[black] (2,1.5) circle (2 pt);
\node at (5,.7) {can only come from};
\fill[black] (8,0) circle (2 pt);
\fill[black] (9,.5) circle (2 pt);
\fill[black] (9,0) circle (2 pt);
\fill[black] (8,.5) circle (2 pt);
\fill[black] (8,1.5) circle (2 pt);
\fill[black] (8,1) circle (2 pt);
\fill[black] (8.5,1) circle (2 pt);
\fill[black] (9,1) circle (2 pt);
\fill[black] (8.5,1.5) circle (2 pt);
\fill[black] (9,1.5) circle (2 pt);
\end{tikzpicture}
\end{center}
This concludes the proof of the first claim. We now prove the second claim. For any term in the expression (\ref{scrollexpression1}) the set of points occurring in the wedge part can be obtained by starting from the set $\{P_1,\ldots,P_{p+1}\}\subset\{0,\ldots,a\}\times\{0,\ldots,b-1\}$, removing two points and then moving some points up (adding (0,1)) without any collisions. Since every column initially contains at most $b$ points you can never get a full column with $b+1$ points by moving points up. This concludes the proof of the second claim and hence the lemma.
\end{proof}
Finally we prove theorem \ref{firstrow}.
\begin{proof}[Proof of theorem \ref{firstrow}]
We already proved the first statement in proposition \ref{usingres1}. We now prove the second statement. By lemma \ref{usingres2} we have that $\kappa_{p,1}\leq p\binom{(a+1)b}{p+1}+p$, so we only have to prove the other inequality. We claim that $K_{p,1}$ is isomorphic to the kernel of $\bigwedge^{p-1}V\otimes I\rightarrow\bigwedge^{p-2}V\otimes S^3V$ where $I\subseteq S^2V$ consists of the quadrics generating the ideal of $\PP^1\times\PP^1$ in $\PP^{N-1}$. This follows from some diagram chasing in the following diagram:
\begin{center}
\begin{tikzcd}
0\arrow{r} & \bigwedge^{p+1}V \arrow{r}\arrow{d} & \bigwedge^{p+1}V\arrow{d} \\
0\arrow{r} & \bigwedge^{p}V\otimes V \arrow{r}\arrow{d} & \bigwedge^{p}V\otimes V\arrow{d} \\
\bigwedge^{p-1}V\otimes I \arrow{r} & \bigwedge^{p-1}V\otimes S^2V \arrow{r}\arrow{d} & \bigwedge^{p-1}V\otimes R_2\arrow{d} \\
 & \bigwedge^{p-2}V\otimes S^3V \arrow{r} & \bigwedge^{p-2}V\otimes R_3 
\end{tikzcd}
\end{center}
where $R$ is the homogeneous coordinate ring of $\PP^1\times\PP^1$. We leave the details to the reader. Now the expressions in lemma \ref{independent} are elements of this kernel, and by the lemma they are linearly independent. The number of such expressions is exactly $p\binom{(a+1)b}{p+1}+p$, which is therefore a lower bound on $\kappa_{p,1}$. This concludes the proof.
\end{proof}
\section{Bidegree tables}
The $k$-algebras we have been working with aren't just graded, each graded piece also has a bidegree decomposition. For instance the degree one part of $R_{a,b}$ is a vector space with basis elements $v_{(i,j)}$ where $0\leq i\leq a$ and $0\leq j\leq b$. So it decomposes as a direct sum of one-dimensional subspaces which are eigenspaces of the torus action on $\PP^1\times\PP^1$. Each $v_{(i,j)}$ is said to have bidegree $(i,j)$. So the polynomial ring in $(a+1)(b+1)$ variables and all the modules we have considered aren't just graded but their graded pieces all have a bidegree decomposition. So the syzygy spaces $K_{p,q}$ also have such a decomposition and this can be captured in the bidegree table. In the introduction we already showed the bidgree table of $K_{11,1}$ for $a=b=3$.
\begin{proposition}\label{cross}
In the case $3\leq a=b$ the bidegree table of $\kappa_{p,1}$ with $p=a^2+a-1$ is 2 at place $(a^2(a+1/2),a^2(a+1)/2)$, it takes the value 1 at points where one coordinate is $a^2(a+1)/2$ and the other is different from $a^2(a+1)/2$ but with the difference being smaller than $a(a+1)/2$. Everywhere else it is zero.
\end{proposition}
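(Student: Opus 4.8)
The plan is to read the bidegree table directly off the explicit basis of $K_{p,1}$ constructed in the proof of Theorem \ref{firstrow}. Recall that for $p=a^2+a-1$ the expressions (\ref{scrollexpression1}) and (\ref{scrollexpression2}) form a basis of $K_{p,1}$: they lie in the relevant Koszul kernel, they are linearly independent by Lemma \ref{independent}, and their number $p\binom{(a+1)b}{p+1}+p$ equals $\kappa_{p,1}$ by Proposition \ref{usingres2} together with the end of the proof of Theorem \ref{firstrow}. The first thing I would check is that each of these basis vectors is homogeneous for the bidegree (torus) grading. This is a short bookkeeping step: in every term of (\ref{scrollexpression1}) the two points $P_{l'_1},P_{l'_2}$ removed from the wedge reappear inside the quadric, so the total bidegree of each term is $(\sum_{m=1}^{p+1}P_m)+(0,j+1)$, independent of the term; symmetrically every term of (\ref{scrollexpression2}) has total bidegree $(\sum_m P_m)+(j+1,0)$. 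Once this is in place, the bidegree table is simply the function sending a lattice point $u$ to the number of basis vectors of bidegree $u$.

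Next I would specialize to $a=b$. Here $\{0,\ldots,a\}\times\{0,\ldots,b-1\}$ has exactly $(a+1)b=p+1$ points, so in (\ref{scrollexpression1}) the set $\{P_1,\ldots,P_{p+1}\}$ is forced to be the whole rectangle, and likewise for (\ref{scrollexpression2}); the only remaining freedom is the index $j\in\{0,\ldots,p-1\}$, which gives $p$ expressions of each kind, $2p$ in all. A direct evaluation of the sum of the lattice points of an $(a+1)\times a$ grid and of its transpose then yields, writing $c:=a^2(a+1)/2$ and noting $a(a^2-1)/2=c-a(a+1)/2$, that expression (\ref{scrollexpression1}) with parameter $j$ has bidegree $(c,\ c-a(a+1)/2+j+1)$ and expression (\ref{scrollexpression2}) with parameter $j$ has bidegree $(c-a(a+1)/2+j+1,\ c)$.

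From there the conclusion is immediate. As $j$ runs over $0,\ldots,p-1=a(a+1)-2$, the second coordinate of (\ref{scrollexpression1}) runs exactly once through the $p$ consecutive integers from $c-a(a+1)/2+1$ to $c+a(a+1)/2-1$; hence the (\ref{scrollexpression1}) basis vectors occupy, each with multiplicity one, precisely the lattice points $(c,c')$ with $|c'-c|<a(a+1)/2$, and symmetrically the (\ref{scrollexpression2}) vectors occupy the points $(c',c)$ with $|c'-c|<a(a+1)/2$. These two arms meet only at $(c,c)$, where one vector of each kind sits; they are linearly independent by Lemma \ref{independent}, so that entry equals $2$, every other entry on the cross equals $1$, and off the cross the table is $0$. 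This is exactly the stated pattern.

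I do not expect a serious obstacle here: the only slightly delicate point is the homogeneity check in the first paragraph, after which the argument reduces to summing coordinates over a rectangular grid and counting integers in an interval. The essential use of $a=b$ is that it forces the point set in (\ref{scrollexpression1}) and (\ref{scrollexpression2}) to be the full rectangle; for $a<b$ there would be $\binom{(a+1)b}{p+1}$ such sets and the cross would degenerate into a more complicated figure.
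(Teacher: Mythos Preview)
Your argument is correct and follows essentially the same route as the paper: use that the expressions (\ref{scrollexpression1}) and (\ref{scrollexpression2}) form a basis of $K_{p,1}$ (from the proof of Theorem \ref{firstrow}), compute their bidegrees, and read off the cross. Your write-up is in fact more detailed than the paper's, spelling out the homogeneity check and the fact that for $a=b$ the point set is forced to be the full rectangle; note incidentally that the paper's short proof appears to interchange the labels (\ref{scrollexpression1}) and (\ref{scrollexpression2}) when stating the bidegrees, while your assignment $(c,\,c-a(a+1)/2+j+1)$ for (\ref{scrollexpression1}) is the correct one.
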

In other words the the cross pattern shown in the beginnning of the article occurs whenever $3\leq=b$.
\begin{proof}
The expressions (\ref{scrollexpression1}) determined by the number $j$ have bidegree $(a(a^2-1)/2+j+1,a^2(a+1)/2)$ and the expressions (\ref{scrollexpression2}) determined by the number $j$ have bidegree $(a^2(a+1)/2,a(a^2-1)/2+j+1)$. Together these form a basis of $K_{p,1}$ by the proof of theorem \ref{firstrow}.
\end{proof}
We conclude with two examples. Both tables are rotated 90 degrees. The following is $a=3$, $b=4$, $p=15$:
\begin{center}
\begin{tabular}{ccccccccccccccccc}
0&0&0&0&0&0&0&0&0&0&0&0&0&0&0&0&0\\
0&1&1&1&1&1&1&1&1&1&1&1&1&1&1&1&0\\
0&0&0&0&0&0&0&0&0&0&0&0&0&0&0&0&0\\
\end{tabular}
\end{center}
We get such a line whenver $p=ab+b-1$ and $3\leq a<b$.
The following is more interesting, namely $a=3$, $b=4$, $p=14$:
\begin{center}
\begin{tabular}{ccccccccccccccccc}
0&0&0&0&0&0&0&0&1&0&0&0&0&0&0&0&0\\
0&0&0&0&0&0&0&0&1&0&0&0&0&0&0&0&0\\
0&0&0&0&0&0&0&0&1&0&0&0&0&0&0&0&0\\
0&0&0&0&0&0&0&0&1&0&0&0&0&0&0&0&0\\
0&0&0&0&0&0&0&0&1&0&0&0&0&0&0&0&0\\
1&2&3&4&4&4&4&4&5&4&4&4&4&4&3&2&1\\
1&2&3&4&4&4&4&4&5&4&4&4&4&4&3&2&1\\
1&2&3&4&4&4&4&4&5&4&4&4&4&4&3&2&1\\
1&2&3&4&4&4&4&4&5&4&4&4&4&4&3&2&1\\
0&0&0&0&0&0&0&0&1&0&0&0&0&0&0&0&0\\
0&0&0&0&0&0&0&0&1&0&0&0&0&0&0&0&0\\
0&0&0&0&0&0&0&0&1&0&0&0&0&0&0&0&0\\
0&0&0&0&0&0&0&0&1&0&0&0&0&0&0&0&0\\
0&0&0&0&0&0&0&0&1&0&0&0&0&0&0&0&0\\
\end{tabular}
\end{center}
Obviously this shape comes from the two sets of syzygies from the embeddings of $\PP^1\times\PP^1$ into $\PP^a\times \PP^1$ and $\PP^1\times\PP^b$.\\

Most of the time bidegree tables of toric surfaces have a convex shape. This gives a class of counterexamples, namely for $p=ab+a-1$, whenever $3\leq a\leq b$. Of course there are other counterexamples as well. But here there is a very natural explanation for the odd shapes, namely the syzygies are coming from two different embeddings of $\PP^1\times\PP^1$ into rational normal scrolls.


\begin{thebibliography}{99}
\bibitem{nagelaprodu} Marian Aprodu and Jan Nagel, \emph{Koszul cohomology and algebraic geometry}, University Lecture Series \textbf{52}, American Mathematical Society, 125 pp.\ (2010)

\bibitem{computingbetti}
W. Castryck, F. Cools, J. Demeyer, A. Lemmens, \emph{Computing graded Betti tables of toric surfaces}, preprint,
arXiv:1606.08181

\bibitem{eisenbud} D.\ Eisenbud, \emph{The geometry of syzygies. A second course in commutative algebra
and algebraic geometry}, Graduate Texts in Mathematics \textbf{229}, Springer-Verlag, New York (2005)

\bibitem{greenkoszul} M.\ Green, \emph{Koszul cohomology and the geometry of projective varieties}, Journal of Differential Geometry, \textbf{19}, pp.\ 125-171, (1984)

\bibitem{lemmens}
A.\ Lemmens, \emph{On the n-th row of the graded Betti table of an n-dimensional toric variety}, Journal of Algebraic Combinatorics, DOI: 	10.1007/s10801-017-0786-y

\bibitem{oeding}
L.\ Oeding, C.\ Raicu, S.\ Sam \emph{On the (non-)vanishing of syzygies of Segre embeddings}, preprint, arXiv:1708.03803

\bibitem{rubei} Elena Rubei, \emph{On syzygies of Segre embeddings}, Proceedings of the American Mathematical Society, \textbf{130}, pp.\ 3483-3493, (2002)

\bibitem{rubei2} Elena Rubei, \emph{Resolutions of Segre embeddings of projective spaces of any dimension}, Journal of Pure and Applied Algebra, \textbf{208}, pp.\ 29-37, (2007)

\bibitem{schreyer}
F.-O.\ Schreyer, \emph{Syzygies of Canonical Curves and Special Linear Series}, Mathematische Annalen \textbf{275}, pp.\ 105-137 (1986)
\end{thebibliography}
\end{document}